\newtheorem{theorem}{Theorem}[section]
\newtheorem{proposition}[theorem]{Proposition}
\newtheorem{corollary}[theorem]{Corollary}
\newtheorem*{theorem*}{Theorem}
\newtheorem*{proposition*}{Proposition}
\newtheorem*{corollary*}{Corollary}
\newtheorem*{lemma*}{Lemma}
\theoremstyle{definition}
\newtheorem{definition}[theorem]{Definition}
\newtheorem{punto}[theorem]{}
\newtheorem*{punto*}{ }
\newtheorem{remark}[theorem]{Remark}
\newtheorem*{remark*}{Remark}
\newtheorem*{definition*}{Definition}
\newcommand{\coring}[1]{\mathfrak{#1}}
\newcommand{\tensor}[1]{\otimes_{#1}}
\newcommand{\rmod}[1]{\mathsf{Mod}_{#1}}
\newcommand{\rcomod}[1]{\mathsf{Comod}_{#1}}
\newcommand{\cotensor}[1]{\square_{#1}}
\newcommand{\cat}[1]{\mathcal{#1}}
\newcommand{\upmatrix}[3]{\scriptscriptstyle{\begin{pmatrix}
  #1 & 0 \\
  #2 & #3
\end{pmatrix}}}
\begin{document}
\allowdisplaybreaks

\title{Hereditary triangular matrix comonads}
\author{Laiachi El Kaoutit}
\address{Universidad de Granada, Departamento de \'{A}lgebra. Facultad de Educaci\'on, Econom\'{\i}a y Tecnolog\'{\i}a. 
Cortadura del Valle s/n. E-51001 Ceuta, Spain}
\email{kaoutit@ugr.es}
\author{Jos\'e G\'omez-Torrecillas}
\address{Universidad de Granada.  Departamento de \'{A}lgebra, Facultad de Ciencias 
 E18071 Granada, Spain} 
\email{gomezj@ugr.es}
\subjclass[2010]{15A30; 18G20; 18C20; 16T15.}
\thanks{Research supported by the Spanish Ministerio de Econom\'{\i}a y Competitividad  and the European Union, grant MTM2013--41992-P}

\keywords{Matrix Comonads; Categories of Comodules; Abelian Hereditary Categories; Global homological dimension.}

\begin{abstract}We recognise Harada's generalized categories of diagrams as a particular case of modules over a monad defined on a finite direct product of additive categories. We work in the dual (albeit formally equivalent) situation, that is, with comodules over comonads.  
With this conceptual tool at hand, we obtain several of the Harada results with simpler proofs, some of them under more general hypothesis, besides with a characterization of the normal triangular matrix comonads that are hereditary, that is, of homological dimension less or equal than $1$. Our methods rest on a matrix representation of additive functors and natural transformations, which allows us to adapt typical algebraic manipulations from Linear Algebra to the additive categorical setting. 
\end{abstract}

\maketitle

\section*{Introduction}
Every complete set of pairwise orthogonal idempotent elements $e_1, \dots, e_n$ of a unital ring $R$ allows to express the ring as a generalized $n \times n$--matrix ring with entries in the bimodules $e_iRe_j$ for $1 \leq i, j \leq n$. The origin of this decomposition can be traced back to the seminal work of B. Peirce \cite{Peirce:1882}. Beyond the role of matrix rings in the classification of semi-simple artinian rings (i.e., Wedderburn-Artin's Theorem), these generalized matrix rings were used to investigate rings of low homological dimension, in the framework of  a systematic  program of  studying non commutative rings of finite homological dimension promoted by Eilenberg,  Ikeda,  Jans, Kaplansky, Nagao, Nakayama, Rosenberg and Zelinsky among others, see \cite{Eilenberg et all:1955, Eilenberg et all:1956, Eilenberg et all:1957, Jans/Nakayama:1957, Kaplansky:1958}.  In particular, S. U. Chase \cite{Chase:1961} and M. Harada  \cite{Harada:1966} used generalized triangular matrix rings to investigate the structure of the semi-primary hereditary rings (i.e., semi-primary rings with homological dimension $1$). 

Inspired by the study of homological properties of abelian categories of diagrams given in \cite[\S IX]{Mitchell:1965}, Harada formulated in \cite{Harada:1967} versions of some of his results on hereditary triangular matrix rings from \cite{Harada:1966} in the framework of the so called abelian categories of generalized commutative diagrams. In this paper, we recognize these categories as the categories of modules (or algebras) over suitable monads. This allows, apart from giving a more conceptual treatment of these categories, to obtain most of the main results from \cite{Harada:1967} with sharper (and simpler, we think) proofs. Our methods, based on a sort of ``Linear Algebra''  for additive functors and natural transformations, also provide some new results, including a characterization of hereditary categories of generalized commutative diagrams.  

We work in the dual, although  formally equivalent, situation than in \cite{Harada:1967}. Thus, we consider a comonad defined on a finite direct product of additive categories, and we investigate how to express in matrix form both its comultiplication and its counit.  In this way, its category of comodules (or coalgebras) is expressed in a such a way that Harada's categories of generalized diagrams become a particular case (namely, the categories of modules over a normal triangular monad). This is done in Section \ref{comonadmatrix}.

In Section \ref{comonadtriangular} we generalize \cite[Theorem 2.3]{Harada:1967} in several directions. On one hand, we do not  assume that the base categories are abelian and, on the other, our categories of comodules are more general than the dual of the categories considered in \cite{Harada:1967} (see Theorem \ref{un medio}). We derive our Theorem \ref{un medio} from a simpler result, namely Theorem \ref{thm:basico}, in conjunction with the kind of linear algebra for functors developed in Section \ref{comonadmatrix}. 

In Section \ref{sec:III} we give full proofs of  the dual form of some results from \cite{Harada:1967}. These results deal with the structure of the injective comodules over a hereditary normal triangular matrix comonad. In particular, they contain part of the dual of \cite[Theorem 3.6]{Harada:1967}. We also prove a characterization of hereditary normal triangular matrix comonads (see Theorem \ref{main-result}). 

Section \ref{sec:coalgebras} illustrates our general results by giving a characterization of the bipartite coalgebras (see \cite{Kosakowska/Simson:2008, Iovanov:2015}) that are right hereditary. Our methods allow us to work over a general commutative ring.

\section{Matrix comonads and their categories of comodules.}\label{comonadmatrix}

In this section we introduce the  notion of matrix comonad and we describe its category of comodules. We develop a kind of linear algebra for matrix  functors and their matrix natural transformations. 

\subsection{Basic notions and notation.}
All the categories in this paper are assumed to be additive, and all functors between them are additive.  If $A$ is an object of a category, then its identity morphism is also denoted by $A$. We shall use the standard notation for composition of functors and/or natural transformations, see \cite{Barr/Wells:1985}.

 If $A, A'$ are objects of an additive category $\cat{A}$, then $A \oplus A'$ denotes its direct sum, and similarly for any finite collection of objects. Recall that the direct sum of finitely many objects is both the product and the coproduct of the family in the category. The symbol $\oplus$ will be used also for the direct sum of morphisms. 

If $F, G : \cat{A} \to \cat{B}$ are functors, then its direct sum functor $F \oplus G : \cat{A} \to \cat{B}$ is given by the composition
\[
\xymatrix{F \oplus G : \cat{A} \ar^-{\Delta}[r] & \cat{A} \times \cat{A} \ar^-{F \times G}[r] & \cat{B} \times \cat{B} \ar^-{\oplus}[r] & \cat{B}},
\]
where $\Delta$ denotes the diagonal functor, and $\cat{C} \times \cat{D}$ denotes the product category of two categories $\cat{C}, \cat{D}$. Thus, $(F \oplus G)(f) = F(f) \oplus G(f)$ for any morphism $f$ of $\cat{A}$. The direct sum of finitely many functors is defined analogously.  

Let $\eta : F \to G \oplus H$ be a natural transformation, where $F, G, H : \cat{A} \to \cat{B}$ are functors. By $\pi_G : G \oplus H \to G$ (resp. $\pi_H : G \oplus H \to H$) we denote the natural transformation defined by the canonical projection. Therefore,  $\eta$ is uniquely determined by the natural transformations $\mu = \pi_G \eta : F \to G$ and $\nu = \pi_H \eta : F \to H$. We will then use the notation $\eta = \mu \dotplus \nu$. 

We will consider comonads (or cotriples) on $\cat{A}$. We refer to \cite{Barr/Wells:1985} for details on (co)monads and their categores of (co)modules (or (co)algebras).

\subsection{Matrix notation.}
Let $\cat{A}_1, \dots, \cat{A}_n, \cat{B}_1, \dots, \cat{B}_m$ be additive categories  and consider the product categories 
\begin{equation}\label{AB}
\cat{A} = \cat{A}_1 \times \cdots \times \cat{A}_n, \quad \cat{B} = \cat{B}_1 \times \cdots \times \cat{B}_m.
\end{equation}
 For every $j = 1, \dots, n$, let $\pi_j :
\cat{A} \rightarrow \cat{A}_j$ (resp.~$\iota_j : \cat{A}_j \rightarrow \cat{A}$) denote the canonical projection (resp. injection) functors, and similarly for $\cat{B}$. 
An additive functor $F : \cat{A} \rightarrow \cat{B}$ is determined by 
$m\times n$ functors $F_{ji} = \pi_i F \iota_j : \cat{A}_j \rightarrow
\cat{B}_i, j = 1, \dots, n, i = 1, \dots, m$, since, from the equalities
\[
1_{\cat{A}} = \iota_1\pi_1 \oplus \cdots \oplus \iota_n \pi_n, \quad 1_{\cat{B}} = \iota_1\pi_1 \oplus \cdots \oplus \iota_m \pi_m,\]
we obtain
\[
F = \bigoplus_{i,j} \iota_i\pi_iF\iota_j\pi_j =
\bigoplus_{i,j}\iota_i F_{ji}\pi_j.
\]
This means that given a  morphism  $f = (f_1, \dots,
f_n)$ in $\cat{A}$, we have 
\[
Ff = (\oplus_j F_{j1}f_j, \cdots, \oplus_j F_{jm}f_j).
\]
This expression can be represented in matrix form as 
\[
Ff = \left(%
\begin{array}{cccc}
  F_{11} & F_{21} & \cdots & F_{n1} \\
  F_{12} & F_{22} & \cdots & F_{n2} \\
  \vdots & \vdots &  & \vdots \\
  F_{1m} & F_{2m} & \cdots & F_{nm} \\
\end{array}%
\right)\left(%
\begin{array}{c}
f_1 \\
f_2 \\
\vdots \\
f_n
\end{array}%
\right)
\]

Now, if  $\eta : F \rightarrow G$ is a natural transformation, where $G : \cat{A} \to \cat{B}$ is a second functor,  then we have the natural transformations
\begin{equation}\label{Eq:etaij}
\eta^{ij} = \pi_j \eta {\iota_i} : F_{ij} \rightarrow G_{ij},  \qquad i = 1, \dots, n, \; j = 1, \dots, m 
\end{equation}
which completely determine $\eta$ as follows. For each $i = 1, \dots, n$, and each $A
\in \cat{A}$, we consider the canonical morphism $
\xi^i_A : \iota_i\pi_iA \rightarrow A$ given by the decomposition $
A = \iota_1\pi_1A \oplus \cdots \oplus \iota_n\pi_n A$.  From  the naturality of $\eta$, we get the following commutative diagrams
\[
\xymatrix{FA = F\iota_1\pi_1A \oplus \cdots \oplus F\iota_n\pi_nA
\ar^{\eta_A}[r] & GA = G\iota_1\pi_1A \oplus \cdots \oplus
G\iota_n\pi_nA \\
F\iota_i\pi_iA \ar_{\eta_{\iota_i\pi_iA}}[r] \ar^{F\xi^i_A}[u]
&G\iota_i\pi_iA \ar^{G\xi^i_A}[u] }
\]
which show  that $\eta_A = \oplus_i \eta_{\iota_i\pi_iA}$
and, thus, for each  $j = 1, \dots, n$, we have
\[
\pi_j\eta_A = \oplus_{i} \pi_j\eta_{\iota_i\pi_iA} = \oplus_{i}
\eta^{ij}_{\pi_iA}
\]
Since $\eta_A$ is defined by the morphisms  $\pi_j\eta_A$,  $j = 1,
\dots,  m$, we get that $\eta$ is entirely determined by $ \eta^{ij},
i = 1, \dots, n,  j= 1, \dots, m$. So we can represent our natural transformation in a matrix form
\[
\eta = \left(%
\begin{array}{cccc}
  \eta^{11} & \eta^{21} & \cdots & \eta^{n1} \\
  \eta^{12} & \eta^{22} & \cdots & \eta^{n2} \\
  \vdots & \vdots &  & \vdots \\
  \eta^{1 m} & \eta^{2  m} & \cdots & \eta^{n  m} \\
\end{array}%
\right) :
\left(%
\begin{array}{cccc}
  F_{11} & F_{21} & \cdots & F_{n1} \\
  F_{12} & F_{22} & \cdots & F_{n2} \\
  \vdots & \vdots &  & \vdots \\
  F_{1 m} & F_{2  m} & \cdots & F_{n  m} \\
\end{array}%
\right) \longrightarrow
\left(%
\begin{array}{cccc}
  G_{11} & G_{21} & \cdots & G_{n1} \\
  G_{12} & G_{22} & \cdots & G_{n2} \\
  \vdots & \vdots &  & \vdots \\
  G_{1  m} & G_{2  m} & \cdots & G_{n  m} \\
\end{array}%
\right).
\]

\subsection{Matrix operations.}
Let 
\[
\xymatrix{ \cat{A} \ar^-{F}[r] & \cat{B} \ar^-{G}[r] & \cat{C}}
\]
be functors, where $\cat{C} = \cat{C}_1 \times \cdots \times \cat{C}_l$, and $\cat{A}, \cat{B}$ are finite products as in \eqref{AB}.  Then the composite functor $H = GF$ is represented in matrix form as the ``usual matrix product'' of matrices representing $G$ and $F$. This means that
\[
H = \left( \begin{array}{cccc}
  H_{11} & H_{21} & \cdots & H_{n1} \\
  H_{12} & H_{22} & \cdots & H_{n2} \\
  \vdots & \vdots &  & \vdots \\
  H_{1 l} & H_{2  l} & \cdots & H_{n l} \\
\end{array}
\right), 
\]
where 
\[
H_{ji} = \bigoplus_{k=1}^{m} G_{ki}F_{jk}, \qquad  i = 1, \dots, l, \; j = 1, \dots, n.
\]

As for natural transformations concerns, let $\eta: F \rightarrow F', \mu: F' \rightarrow F'' $ be natural transformations, where $F, F', F'' : \cat{A} \to \cat{B}$ are functors. A straightforward argument shows that the matrix representing $\mu\eta$ is the ``Hadamard  product'' of the matrices that represent $\mu$ and $\eta$, namely
\begin{multline}
\mu \eta = \left( \begin{array}{cccc}
  \mu^{11} & \mu^{21} & \cdots & \mu^{n1} \\
  \mu^{12} & \mu^{22} & \cdots & \mu^{n2} \\
  \vdots & \vdots &  & \vdots \\
  \mu^{1 m} & \mu^{2  m} & \cdots & \mu^{n  m} \\
\end{array}\right) \cdot \left( \begin{array}{cccc}
  \eta^{11} & \eta^{21} & \cdots & \eta^{n1} \\
  \eta^{12} & \eta^{22} & \cdots & \eta^{n2} \\
  \vdots & \vdots &  & \vdots \\
  \eta^{1 m} & \eta^{2  m} & \cdots & \eta^{n  m} \\
\end{array}\right) \\ = \left( \begin{array}{cccc}
  \mu^{11}\eta^{11} & \mu^{21}\eta^{21} & \cdots & \mu^{n1}\eta^{n1} \\
  \mu^{12}\eta^{12} & \mu^{22}\eta^{22} & \cdots & \mu^{n2}\eta^{n2} \\
  \vdots & \vdots &  & \vdots \\
  \mu^{1 m}\eta^{1 m} & \mu^{2 m}\eta^{2  m} & \cdots & \mu^{n m}\eta^{n  m} \\
\end{array}\right) 
\end{multline}

Finally, let us consider the composition of a functor and a natural transformation. So let $\eta : F \rightarrow F'$ with $F, F' : \cat{A} \to \cat{B}$, and $G : \cat{B} \to \cat{C}$. The natural transformation $G \eta : GF \rightarrow GF'$ is easily shown to have a matrix representation 
\[
G \eta = \left( \begin{array}{cccc}
  (G\eta)^{11} & (G\eta)^{21} & \cdots & (G\eta)^{n1} \\
  (G\eta)^{12} & (G\eta)^{22} & \cdots & (G\eta)^{n2} \\
  \vdots & \vdots &  & \vdots \\
  (G\eta)^{1 l} & (G \eta)^{2  l} & \cdots & (G \eta)^{n l} \\
\end{array}
\right), 
\]
where 
\[
(G \eta)^{ji} = \bigoplus_{k=1}^m G_{ki}\eta^{jk}, \qquad  i = 1, \dots, l, \; j = 1, \dots, n.
\]
That is, the composition $G \eta$ leads to the ``usual matrix product''. This also holds for the composition in the opposite order, that is, given a natural transformation $\gamma : G \rightarrow G'$, for functors $G, G' : \cat{B} \to \cat{C}$, and  $F : \cat{A} \rightarrow \cat{B}$, the matrix representation of the natural transformation $\gamma F : GF \rightarrow G'F$ is given by
\[
\gamma F = \left( \begin{array}{cccc}
  (\gamma F)^{11} & (\gamma F)^{21} & \cdots & (\gamma F)^{n1} \\
  (\gamma F)^{12} & (\gamma F)^{22} & \cdots & (\gamma F)^{n2} \\
  \vdots & \vdots &  & \vdots \\
  (\gamma F)^{1 l} & (\gamma F)^{2  l} & \cdots & (\gamma F)^{n l} \\
\end{array}
\right), 
\]
where
\[
(\gamma F)_{ji} = \bigoplus_{k=1}^m \gamma^{ki}F_{jk}, \qquad  i = 1, \dots, l, \; j = 1, \dots, n.
\]

\subsection{Matrix comonads.}
Fix a category $\cat{A} = \cat{A}_1 \times \cdots \times \cat{A}_n$. Let  $F : \cat{A} \to \cat{A}$ be an endofunctor, and 
$\delta : F \to F^2$ a natural transformation.  In matrix form, 
\[
\delta = (\delta^{ij}) : (F_{ij}) \to (F_{ij})(F_{ij}), 
\]
where $\delta^{ij} : F_{ij} \rightarrow \oplus_k F_{kj}F_{ik}$ is a natural transformation for every $i,j = 1, \dots, n$, which is uniquely expressed as
 \[
\delta^{ij} = \delta^{i1j} \dotplus \delta^{i2j} \dotplus \cdots \dotplus \delta^{inj}
\]
for some natural transformations  $\delta^{ikj} : F_{ij} \rightarrow
F_{kj}F_{ik}$ with  $k = 1, \dots, n$. The coassociativity of $F$ is given by the equation 
$
(\delta F) \delta = (F\delta) \delta
$
or, equivalently, 
\begin{equation}\label{coassij}
(\delta F)^{ij} \delta^{ij} = (F\delta)^{ij} \delta^{ij}, \quad 1
\leq i,j \leq n.
\end{equation}

We know that 
\[
(\delta F)^{ij} = \bigoplus_{k}\delta^{kj}{F_{ik}}
\quad \text{ and } \,\,\,\,  
(F\delta)^{ij} = \bigoplus_{l}F_{lj}\delta^{il}.
\]
Observe that for each $i,j,k = 1, \dots, n$, the natural transformation 
$\delta^{kj}F_{ik} : F_{kj}F_{ik} \rightarrow
\bigoplus_{l}F_{lj}F_{kl}F_{ik}$ is given by 
\[
\delta^{kj}F_{ik} = \delta^{k1j} F_{ik} \dotplus  \delta^{k2j} F_{ik} \dotplus \cdots \dotplus
\delta^{knj} F_{ik}
\]
while, for each $i,j,l = 1, \dots, n$, the natural transformation 
$F_{lj}\delta^{il} : F_{lj}F_{il} \rightarrow
\bigoplus_{k}F_{lj}F_{kl}F_{ik}$ is defined by 
\[
F_{lj}\delta^{il} = F_{lj}\delta^{i1l} \dotplus 
F_{lj}\delta^{i2l} \dotplus \cdots \dotplus
F_{lj}\delta^{inl}.
\]

We then obtain that 
\[
(\delta F)^{ij}\delta^{ij} = \left(\bigoplus_k
\delta^{kj} {F_{ik}}\right)\left(\underset{k}\dotplus \delta^{ikj}\right) =
\underset{k,l}\dotplus ( \delta^{klj} {F_{ik}} )\delta^{ikj} 
\]
and
\[
(F\delta)^{ij} \delta^{ij} = \left(
\bigoplus_{l}F_{lj}\delta^{il}\right)\left(\underset{l}\dotplus \delta^{ilj}\right) =  \underset{k,l}\dotplus (F_{lj}\delta^{ikl})\delta^{ilj} .
\]
Both vectors should be equal, which is equivalent to
\[
(\delta^{klj} F_{ik}) \delta^{ikj} =
(F_{lj}\delta^{ikl})\delta^{ilj}, \quad (1 \leq i,j,k,l \leq n).
\]
That is, all the diagrams 
\[
\xymatrix{F_{ij} \ar^{\delta^{ilj}}[r] \ar_{\delta^{ikj}}[d] & F_{lj}F_{il} \ar^{F_{lj}\delta^{ikl}}[d]\\
F_{kj}F_{ik} \ar_{\delta^{klj}{F_{ik}}}[r] & F_{lj}F_{kl}F_{ik}}
\]
are commutative.

Next, we will discuss when a comultiplication $\delta : F \to F^2$ is counital, for a given counity $\varepsilon : F \to 1_{\cat{A}}$. The general matrix form of $\varepsilon$ is

\begin{equation}\label{Eq:epsilon}
\varepsilon = \left( %
\begin{array}{cccc}
\varepsilon^1 & 0 & \cdots & 0 \\
0 & \varepsilon^2 & \cdots & 0 \\
\vdots & \vdots & & \vdots \\
0 & 0 & \cdots & \varepsilon^n
\end{array}
\right) : %
\left(%
\begin{array}{cccc}
F_{11} & F_{21} & \cdots & F_{n1} \\
F_{12} & F_{22} & \cdots & F_{n2} \\
\vdots & \vdots & & \vdots \\
F_{1n} & F_{2n} & \cdots & F_{nn}
\end{array}
\right) \longrightarrow %
\left( %
\begin{array}{cccc}
1 & 0 & \cdots & 0 \\
0 & 1 & \cdots & 0 \\
\vdots & \vdots & & \vdots \\
0 & 0 & \cdots & 1
\end{array}
\right)
\end{equation}
for some natural transformations $\varepsilon^i : F_{ii} \to 1_{\cat{A}_i}$ for $i = 1, \dots, n$. The counitality conditions $(\varepsilon F) \delta = 1_F = (F \varepsilon) \delta$ lead to matrix equalities equivalent to
\[
(\varepsilon^j F_{ij})\delta^{ijj} = 1_{F_{ij}} = (F_{ij}\varepsilon^i)\delta^{iij}, \qquad (i,j = 1 \dots,n) 
\]
that is, all the diagrams
\[
\xymatrix{F_{ij} \ar@{=}[rd] \ar^-{\delta^{ijj}}[r] \ar_-{\delta^{iij}}[d] & F_{jj}F_{ij} \ar^-{\varepsilon^jF_{ij}}[d] \\
F_{ij}F_{ii} \ar_-{F_{ij}\varepsilon^i}[r] & F_{ij}}
\]
commute. 

We have so far proved the following proposition. 

\begin{proposition}\label{comonadasmatrices}
Let $\cat{A} = \cat{A}_1 \times \dots \times \cat{A}_n$ be the product category of finitely many categories $\cat{A}_1, \dots, \cat{A}_n$, and $F : \cat{A} \to \cat{A}$ any functor.  There is a bijective correspondence between
\begin{enumerate}
\item Comonads $(F, \delta, \varepsilon)$;
\item Sets of natural transformations  $\{ \delta^{ikj} :
F_{ij} \rightarrow F_{kj}F_{ik} : i,j, k = 1, \dots, n \}$, and $\varepsilon^i : F_{ii} \to 1_{\cat{A}_i} :  i = 1, \dots, n$ such that 
\begin{enumerate}
\item For all $i, j, k, l = 1, \dots, n$ the diagram  
\begin{equation}\label{Eq:cuadrados}
\xymatrix{F_{ij} \ar^{\delta^{ilj}}[r] \ar_{\delta^{ikj}}[d] & F_{lj}F_{il} \ar^{F_{lj}\delta^{ikl}}[d]\\
F_{kj}F_{ik} \ar_{\delta^{klj}{F_{ik}}}[r] & F_{lj}F_{kl}F_{ik}} 
\end{equation}
conmutes, and %
\item for all $i,j = 1, \dots, n$, the diagram
\begin{equation}\label{Eq:triangulos}
\xymatrix{F_{ij} \ar@{=}[rd] \ar^-{\delta^{ijj}}[r] \ar_-{\delta^{iij}}[d] & F_{jj}F_{ij} \ar^-{\varepsilon^jF_{ij}}[d] \\
F_{ij}F_{ii} \ar_-{F_{ij}\varepsilon^i}[r] & F_{ij}}
\end{equation}
commutes. 
\end{enumerate}
\end{enumerate}
\end{proposition}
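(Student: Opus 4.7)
The matrix analysis preceding the statement already carries out the substance of the computation; my plan is simply to organize that material into the two directions of the bijection and to identify the one genuinely technical point.

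For the implication (1) $\Rightarrow$ (2), I would start from a comonad $(F, \delta, \varepsilon)$ and decompose $\delta$ using the matrix representation of natural transformations: the $(i,j)$ entry $\delta^{ij} : F_{ij} \to \bigoplus_k F_{kj}F_{ik}$ lands in a finite direct sum, so it is uniquely written as $\delta^{i1j} \dotplus \cdots \dotplus \delta^{inj}$ via projection onto each summand. For $\varepsilon$, one first checks that the matrix of $\varepsilon$ is diagonal, because the identity functor of $\cat{A}$ has matrix $(\delta_{ij}1_{\cat{A}_i})$ and a natural transformation into a zero functor must vanish; this isolates the diagonal entries $\varepsilon^i : F_{ii} \to 1_{\cat{A}_i}$. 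I would then translate the identity $(\delta F)\delta = (F\delta)\delta$ into matrix form using the formulas for the composition of a functor with a natural transformation; the resulting equalities, precisely as computed in the paragraphs above, amount to the commutativity of (\ref{Eq:cuadrados}) for every quadruple $(i,j,k,l)$. Counitality is treated in the same way and yields (\ref{Eq:triangulos}).

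For the implication (2) $\Rightarrow$ (1), I would reverse the process: given the data of (2), I assemble $\delta$ by declaring its $(i,j)$ matrix entry to be $\delta^{ij} = \delta^{i1j} \dotplus \cdots \dotplus \delta^{inj} : F_{ij} \to \bigoplus_k F_{kj}F_{ik}$, and define $\varepsilon$ as the diagonal matrix with entries $\varepsilon^i$. The coassociativity and counit axioms are then checked componentwise, because each side is a natural transformation into a finite direct sum and so is determined by its projections; the component equations reduce, again by the composition formulas, to the assumed commutativity of (\ref{Eq:cuadrados}) and (\ref{Eq:triangulos}).

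That the two constructions are mutually inverse is immediate from the uniqueness of the decomposition $\delta^{ij} = \underset{k}{\dotplus}\, \delta^{ikj}$, which is the universal property of a direct sum. The only non-cosmetic obstacle is keeping the bookkeeping of four indices $(i,j,k,l)$ straight when matching $(\delta F)^{ij}\delta^{ij}$ against $(F\delta)^{ij}\delta^{ij}$; since both sides expand as $\dotplus$-combinations indexed by the pair $(k,l)$, equality is in fact componentwise, and the component corresponding to $(k,l)$ is exactly the diagram (\ref{Eq:cuadrados}).
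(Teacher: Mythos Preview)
Your proposal is correct and follows essentially the same approach as the paper: the discussion preceding the proposition already computes both sides of the coassociativity and counitality equations componentwise and shows they are \emph{equivalent} to the commutativity of \eqref{Eq:cuadrados} and \eqref{Eq:triangulos}, which is precisely your two-direction argument. Your only addition is to make the inverse construction (2) $\Rightarrow$ (1) explicit, but this is already implicit in the paper's use of ``equivalent to'' throughout the preceding paragraphs.
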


\begin{remark}\label{remark:CT}
Take $i=j=k$ in diagrams \eqref{Eq:cuadrados} and \eqref{Eq:triangulos}, then $(F_{ii}, \delta^{iii},\varepsilon^i)$ is a comomad over $\cat{A}_{i}$.  Also diagrams \eqref{Eq:cuadrados} and \eqref{Eq:triangulos} show that each functor $F_{ij}$ is in fact an $F_{jj}-F_{ii}$-bicomodule functor, in the sense of \cite[Definition 4.7]{Bruni:2011}, see also \cite{Kaoutit/Vercruysse:2009}. Furthermore, if $l=k$ in diagram \eqref{Eq:cuadrados}, we get that each of the $F_{ik}$'s is a balanced bicomodule in a dual sense of \cite[\S 3.2]{Bruni:2011}. In this way, if the 'cotensor product'  functor $F_{kj} \square_{F_{kk}}F_{ik}$ do exist, then the natural transformation $\delta^{ikj}$ factors through   $F_{kj} \square_{F_{kk}}F_{ik}$.
\end{remark}

\begin{definition}
A comonad $(F,\delta,\varepsilon)$ on $\cat{A} = \cat{A}_1 \times \dots \times \cat{A}_n$ is called \emph{normal} if $F_{ii} = 1_{\cat{A}_i}$ and $\varepsilon^i = 1$ for all $i = 1, \dots, n$. Thus, for a normal comonad, we have necessarily that $\delta^{ijj} = \delta^{iij} = 1_{F_{ij}}$ for all $i,j = 1, \dots, n$.
\end{definition}

\begin{remark}
In the case $n = 2$, we deduce from Proposition \ref{comonadasmatrices} that a normal comonad $(F,\delta,\varepsilon)$ is given by natural transformations 
$\delta^{121} : 1_{\cat{A}_1} \rightarrow F_{21}F_{12}$ and
$\delta^{212} : 1_{\cat{A}_2} \rightarrow F_{12}F_{21}$ such that 
$F_{12}\delta^{121} = \delta^{212}{F_{12}}$ and 
$F_{21}\delta^{212} = \delta^{121}{F_{21}}$. Therefore, the normal comonads over  $\cat{A}_1 \times \cat{A}_2$ are in bijection with the wide (right) Morita contexts between  $\cat{A}_1$ and $\cat{A}_2$ as defined in \cite{Castano/Gomez:1995, Castano/Gomez:1998}. 
\end{remark}

\begin{remark}
It is possible to formulate Proposition \ref{comonadasmatrices} in dual form, thus given the structure of the monads on $\cat{A}_1 \times \cdots \times \cat{A}_n$. 
\end{remark}

\subsection{Comodules over matrix comonads}

Consider a comonad $(F,\delta,\varepsilon)$ over $\cat{A} = \cat{A}_1 \times \cdots \times \cat{A}_n$ as in  Proposition  \ref{comonadasmatrices}. Next we want to describe its Eilenberg-Moore category  $\cat{A}_F$ of comodules (or coalgebras). Recall that an object of $\cat{A}_F$ is a morphism $\mathbf{d}_A : A \rightarrow
FA$ in $\cat{A}$ such that the following diagrams are commutative:
\begin{equation}\label{coalgebras}
\xymatrix{A \ar^{\mathbf{d}_A}[r] \ar_{\mathbf{d}_A}[d]& FA \ar^{\delta_A}[d] \\
FA \ar_{F\mathbf{d}_A}[r] & F^2A} \qquad \xymatrix{A
\ar^{\mathbf{d}_A}[r] \ar_{1_A}[dr] & FA \ar^{\varepsilon_A}[d] \\
& A}
\end{equation}
We want to describe these objects in terms of the categories 
$\cat{A}_1, \dots, \cat{A}_n$. Each structure morphism 
$\mathbf{d}_A$ is determined by the projections
\[
\pi_j(\mathbf{d}_A) : A_j \rightarrow \bigoplus_{k}F_{kj}A_k,
\qquad j = 1, \dots, n
\]
and each of them is given by 
\[
\pi_j (\mathbf{d}_A) = 
d_A^{1j} \dotplus
d_A^{2j} \dotplus
\cdots \dotplus
d_A^{nj}
 \quad \hbox{ with  } \,\,\, d_A^{kj} : A_j \rightarrow F_{kj}A_k.
\]

Let us see what restrictions impose the commutativity of the diagrams \eqref{coalgebras} on the morphism  $d_A^{kj}$. We start with the equation  $\delta_A \mathbf{d}_A =
F(\mathbf{d}_A)\mathbf{d}_A$, which lead to the following system of equations
\begin{equation}\label{comodecu}
\pi_j(\delta_A)\pi_j(\mathbf{d}_A) =
\pi_j(F(\mathbf{d}_A))\pi_j(\mathbf{d}_A), \qquad j = 1, \dots, n.
\end{equation}
On the other hand,
\[
\pi_j\delta_A = \bigoplus_l\delta_{A_l}^{lj}, \hbox{ with  }\,\,
\delta_{A_l}^{lj} : F_{lj} A_l\rightarrow
\bigoplus_{k}F_{kj}F_{lk}A_l.
\]
Therefore, 
\begin{equation}\label{deltad}
\pi_j(\delta_A) \pi_j(\mathbf{d}_A) = \left( \bigoplus_l
\delta_{A_l}^{lj}\right) \left( \underset{l}{\dotplus} \, d_A^{lj} \right) = \underset{l}{\dotplus} \, \delta^{lj}_{A_l} d_A^{lj} = \underset{l}{\dotplus} \, \left( \underset{k}{\dotplus}\, \delta^{lkj}_{A_l}\right) d_A^{lj} = \underset{k,l}{\dotplus}\, \delta^{lkj}_{A_l} d_A^{lj}.
\end{equation}

On the other hand,
\begin{multline}\label{Fdd}
\pi_j(F\mathbf{d}_A)\pi_j(\mathbf{d}_A)  = \left(
\bigoplus_kF_{kj}\pi_k(\mathbf{d}_A) \right)\left( \underset{k}{\dotplus}\, d_A^{kj}\right) = \underset{k}{\dotplus}\, F_{kj} \pi_k (\mathbf{d}_A)d_A^{kj} = \\  \underset{k}{\dotplus}\, \left( \underset{l}{\dotplus}\, F_{kj}(d_A^{lk})\right)d_A^{kj} = \underset{k,l}{\dotplus}\, F_{kj}(d_A^{lk})d_A^{kj}.
\end{multline}

We get from \eqref{comodecu}, \eqref{deltad} and  \eqref{Fdd}  that the commutativity of the first diagram in  \eqref{coalgebras} is equivalent to the commutativity of all the diagrams
\begin{equation}\label{deltad=Fdd}
\xymatrix{A_j \ar^{d_A^{kj}}[r] \ar_{d_A^{lj}}[d] & F_{kj}A_k
\ar^{F_{kj}d_A^{lk}}[d] \\
F_{lj}A_l \ar_{\delta_{A_l}^{lkj}}[r] & F_{kj}F_{lk}A_l}
\end{equation}
for $j,k,l = 1, \dots, n$.

The second diagram in \eqref{coalgebras} leads to the equalities 
$\pi_j(\varepsilon_A \mathbf{d}_A) = 1_{A_j}$ for each $j=1,
\dots, n$. Therefore, 
\[
1_{A_j} = \pi_j(\varepsilon_A \mathbf{d}_A) =
\pi_j(\varepsilon_A)\pi_j(\mathbf{d}_A) =
 (0 \oplus \cdots \oplus 0 \oplus {\varepsilon^j_{A_j}} \oplus 0 \oplus \cdots \oplus 0)\left(\underset{k}{\dotplus}\, d_A^{kj}\right) = \varepsilon_{A_j}^j d_A^{jj},
\]
for all $j = 1, \dots, n$. 

The previous discussion gives the following description of the category $\cat{A}_F$ of comodules over $F$.

\begin{proposition}\label{catcomodulos}
Let $\cat{A} = \cat{A}_1 \times \cdots \times \cat{A}_n$ be the product category of finitely many categories $\cat{A}_1, \dots, \cat{A}_n$. If $(F,\delta,\varepsilon)$ is a comonad over $\cat{A}$, then the category of $F$--comodules is described as follows:
\begin{enumerate}
\item \underline{Objects:} They are pairs $(A,\mathbf{d}_A)$ where $A =
(A_1, \dots, A_n) \in \cat{A} = \cat{A}_1 \times \cdots \times
\cat{A}_n$ and $\mathbf{d}_A$ is a set of morphisms 
$\mathbf{d}_A = \{ d_A^{kj} : A_j \rightarrow F_{kj}A_k : 1 \leq j, k \leq n
\}$  such that the diagrams 
\begin{equation}\label{Eq:coaction}
\xymatrix{A_j \ar^{d_A^{kj}}[r] \ar_{d_A^{lj}}[d] & F_{kj}A_k
\ar^{F_{kj}d_A^{lk}}[d] \\
F_{lj}A_l \ar_{\delta_{A_l}^{lkj}}[r] & F_{kj}F_{lk}A_l} \qquad
\xymatrix{A_j \ar^-{d_A^{jj}}[r] \ar@{=}[dr] & F_{jj}A_j \ar^-{\varepsilon^j_{A_j}}[d] \\ & A_j }
\end{equation}
are commutative for  all $j,k,l = 1, \dots, n$. %
\item \underline{Morphisms:} A morphism $f : (A,\mathbf{d}_A)
\rightarrow (B,\mathbf{d}_{B})$ is a set of morphisms  $f =
\{ f_{j} : A_{j} \rightarrow B_j : j = 1, \dots, n \}$ such that the diagrams 
\begin{equation}\label{Eq:morph}
\xymatrix{A_j \ar^{d_A^{kj}}[r] \ar_{f_j}[d] & F_{kj}A_k
\ar^{F_{kj}f_k}[d] \\
B_j \ar_{d_B^{kj}}[r] & F_{kj}B_k }
\end{equation}
commute, for all $j, k = 1, \dots, n$.
\end{enumerate}
\end{proposition}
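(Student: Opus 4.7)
The plan is to systematically translate the two defining diagrams in \eqref{coalgebras} into families of diagrams internal to the component categories $\cat{A}_1, \dots, \cat{A}_n$, using the matrix calculus developed earlier in this section. The first move is to encode a structure morphism $\mathbf{d}_A : A \to FA$ in $\cat{A}$ by its images under the projections $\pi_j : \cat{A} \to \cat{A}_j$: since $\pi_j(\mathbf{d}_A) : A_j \to \bigoplus_k F_{kj}A_k$ decomposes via the $\dotplus$ notation as $d_A^{1j} \dotplus \cdots \dotplus d_A^{nj}$, the structure morphism is faithfully encoded by the $n^2$-family $\{ d_A^{kj} : A_j \to F_{kj}A_k \}$. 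Similarly, a morphism $f$ of $\cat{A}$ is encoded by its projections $f_j = \pi_j(f)$.

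The heart of the proof is to expand the coassociativity equation $\delta_A \mathbf{d}_A = F(\mathbf{d}_A) \mathbf{d}_A$ component by component. Applying $\pi_j$ to the left-hand side and using the matrix description of $\delta$ provided by Proposition \ref{comonadasmatrices}, one obtains the bi-indexed sum $\underset{k,l}{\dotplus}\, \delta^{lkj}_{A_l}\, d_A^{lj}$; applying $\pi_j$ to the right-hand side and using $\pi_j(F \mathbf{d}_A) = \bigoplus_k F_{kj}\, \pi_k(\mathbf{d}_A)$ yields $\underset{k,l}{\dotplus}\, F_{kj}(d_A^{lk})\, d_A^{kj}$. Matching $(k,l)$-components produces exactly the squares on the left of \eqref{Eq:coaction}, one per triple $(j,k,l)$. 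The counit equation $\varepsilon_A \mathbf{d}_A = 1_A$, when projected to the $j$-th factor and expanded using the diagonal matrix form \eqref{Eq:epsilon} of $\varepsilon$, collapses to $\varepsilon^j_{A_j}\, d_A^{jj} = 1_{A_j}$, which is the triangle on the right of \eqref{Eq:coaction}. All of these steps are reversible, so they set up a bijection between structure morphisms $\mathbf{d}_A$ satisfying \eqref{coalgebras} and families $\{ d_A^{kj} \}$ satisfying \eqref{Eq:coaction}.

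For the morphism part, I apply $\pi_j$ to the defining equation $F f \circ \mathbf{d}_A = \mathbf{d}_B \circ f$, expand $\pi_j(F f) = \bigoplus_k F_{kj}(f_k)$ together with the $\dotplus$-decompositions of the two coactions, and match $k$-components to obtain the squares \eqref{Eq:morph}. Conversely, any such family $\{f_j\}$ reassembles into an $F$-comodule morphism by the same calculation run backwards. The main obstacle is purely notational: one must keep the role of the product-projections $\pi_j$ (selecting factors of $\cat{A}$) distinct from that of the $\dotplus$-decomposition (selecting summands inside a single object), and consistently apply the composition rules of the matrix calculus. Once these conventions are fixed, all verifications are mechanical.
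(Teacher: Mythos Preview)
Your proposal is correct and follows essentially the same route as the paper: the paper's own argument (the discussion immediately preceding the proposition) decomposes $\mathbf{d}_A$ via the projections $\pi_j$ and the $\dotplus$-notation, expands both sides of the coassociativity equation to obtain the sums in \eqref{deltad} and \eqref{Fdd}, and matches $(k,l)$-components to get \eqref{deltad=Fdd}, with the counit condition handled analogously. The only addition in your write-up is that you spell out the morphism part explicitly, which the paper leaves implicit; this is welcome but not a departure in method.
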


\begin{remark}\label{monad}
Obviously, it is possible to give the dual statements of the above results in the case of monads.   So a monad $T$ over 
$\cat{A} = \cat{A}_1 \times \cdots \times \cat{A}_n$
is given by a set of functors  $\{ T_{ij} : \cat{A}_i
\rightarrow \cat{A}_j, i,j = 1, \dots, n \}$ with two sets of natural transformations $\{ \mu^{ikj} : T_{kj}T_{ik} \rightarrow T_{ij} : i,j,k
= 1\dots,n \}$, $ \{ \eta^i : 1_{\cat{A}_i} \to T_{ii}, i = 1, \dots, n \}$ such that the diagrams
\[
\xymatrix@C=40pt{T_{lj}T_{kl}T_{ik} \ar^{\mu^{klj}{T_{ik}}}[r]
\ar_{T_{lj}\mu^{ikl}}[d] & T_{kj}T_{ik} \ar^{\mu^{ikj}}[d] \\
T_{lj}T_{il} \ar^{\mu^{ilj}}[r] & T_{ij}}, \qquad \xymatrix{T_{ij} \ar@{=}[rd] \ar^-{\eta^jT_{ij}}[r] \ar_-{T_{ij}\eta^i}[d] & T_{jj}T_{ij} \ar^-{\mu^{ijj}}[d] \\
T_{ij}T_{ii} \ar_-{\mu^{iij}}[r] & T_{ij}}\]
commute for all $i,j,k \in \{1, \dots, n\}$. 
The corresponding  category of modules   (or algebras) is described by dualizing the statements of Proposition \ref{catcomodulos}. When $T$ is a normal matrix monad (i.e., $T_{ii} = 1_{\cat{A}_i}$ and $\eta^i = 1$ for all $i = 1, \dots, n$), $\cat{A}$ is abelian, and $T_{ij} = 0$ for $i >j$, these are the kind of categories which were studied by M. Harada in \cite{Harada:1967}, and refereed to as \emph{categories of generalized diagrams} in abelian categories (see also \cite{Mitchell:1965}).
\end{remark}

\section{Triangular matrix comonads.}\label{comonadtriangular}

We have seen in Remark \ref{monad} that the abelian categories investigated by Harada in \cite{Harada:1967} are categories of modules over triangular normal matrix monads. In this section, we will take advantage of the fact that the existence of a (co)monad representing these categories to give a more systematic approach to their study. In fact, our main theorem in this section is more general in several directions than \cite[Theorem 2.3]{Harada:1967}. At the same time, we find the proof presented here sharpest in some aspects than Harada's one. 

We will say (see Definition \ref{def:comtriangular}) that a matrix comonad $F = (F_{ij})$ is \emph{triangular} if $F_{ij}= 0$ for all $1\leq j < i \leq n$.  The understanding of the case $n = 2$ is the key to the study of the general case.   

\subsection{Triangular matrix comonads of order $2$}\label{3comonad}
Let 
\begin{equation}\label{RVT}
\xymatrix{\cat{C} \ar^-{R}[r] & \cat{C} \ar^-{V}[r] & \cat{D} \ar^-{S}[r] & \cat{D}}
\end{equation}
be functors. By Proposition \ref{comonadasmatrices}, every comonad structure over the endofunctor 
\begin{equation}\label{triangular}
F = \left( \begin{array}{cc}
R & 0 \\
V & S
\end{array}\right)
\end{equation}
of $\cat{C} \times \cat{D}$
 is given by a set of natural transformations
\begin{equation}\label{sextupla}
\left\{ \xymatrix{R \ar^-{\delta^R}[r] & R^2, \; S \ar^-{\delta^S}[r] & S^2, \; V \ar^-{\rho^V} [r] & VR, \; V \ar^-{\lambda^V}[r] & SV, \; R \ar^-{\varepsilon^R}[r] & 1_{\cat{C}}, \; S \ar^-{\varepsilon^S}[r] & 1_{\cat{D}} }\right\}
\end{equation}
such that the following eight diagrams commute.
\begin{equation}\label{soncomonadas}
\xymatrix{R \ar^-{\delta^R}[r] \ar_-{\delta^R}[d] & R^2 \ar^-{R \delta^R}[d] \\
R^2 \ar_-{\delta^R R}[r] & R^3 }\quad
\xymatrix{S \ar^-{\delta^S}[r] \ar_-{\delta^S}[d] & S^2 \ar^-{S \delta^S}[d] \\
S^2 \ar_-{\delta^S S}[r] & S^3 }\quad
\xymatrix{R \ar@{=}[rd] \ar^-{\delta^R}[r] \ar_-{\delta^R}[d] & R^2 \ar^-{\varepsilon^R R}[d] \\
R^2 \ar_-{R\varepsilon^R}[r] & R}\quad
\xymatrix{S \ar@{=}[rd] \ar^-{\delta^S}[r] \ar_-{\delta^S}[d] & S^2 \ar^-{\varepsilon^S S}[d] \\
S^2 \ar_-{S\varepsilon^S}[r] & S}
\end{equation}

\begin{equation}\label{esbicomodulo}
\xymatrix{V \ar^-{\rho^V}[r] \ar_-{\rho^V}[d] & VR \ar^-{V \delta^R}[d] \\
VR \ar_-{\rho^V R}[r] & V R^2 }\quad 
\xymatrix{V \ar^-{\lambda^V}[r] \ar_-{\lambda^V}[d] & SV \ar^-{\delta^S V}[d] \\
SV \ar_-{S \lambda^V }[r] & S^2 V }\quad
\xymatrix{V \ar^-{\lambda^V}[r] \ar@{=}[dr] \ar_-{\rho^V}[d]& SV \ar^-{\varepsilon^S V}[d] \\
VR \ar_-{V \varepsilon^R}[r] & V }\quad
\xymatrix{V \ar^-{\lambda^V}[r] \ar_-{\rho^V}[d]  & SV  \ar^-{S \rho^V}[d]\\
VR \ar_-{\lambda^V R}[r] & SVR}
\end{equation}

By Remark \ref{remark:CT}, the commutativity of the diagrams \eqref{soncomonadas} just says that $(R,\delta^R,\varepsilon^R)$ and $(S,\delta^S,\varepsilon^S)$ are comonads, while the commutative  diagrams \eqref{esbicomodulo} say that $(V,\lambda^V,\rho^V)$ is an \emph{$S-R$--bicomodule functor}.

\begin{theorem}\label{thm:basico}
Let 
\[
\xymatrix{{\upmatrix{R}{V}{S}}: \cat{C} \times \cat{D} \ar[rr] &  & \cat{C} \times \cat{D}}
\] 
be a triangular matrix comonad of order $2$, with the comonad structure given by a sextuple of natural transformations as in \eqref{sextupla} satisfying the conditions \eqref{soncomonadas} and \eqref{esbicomodulo}. If the equalizer of every pair of arrows of the form $\xymatrix{VC \ar@<0.5ex>^-{\rho^V_C}[r] \ar@<-0.5ex>_-{V d_C}[r] & VRC }$, where $(C, d_C)$ is any $R$--comodule, do exist in $\cat{D}$, and $S$ preserves all these equalizers, then there exists a functor $T : \cat{C}_R \to \cat{D}_S$ such that the following diagram commutes,
\[
\xymatrix{\cat{C} \ar^-{V}[r] \ar_-{L}[d]& \cat{D} \\
\cat{C}_R \ar^-{T}[r] & \cat{D}_S \ar_-{U}[u]},
\]
where $L : \cat{C} \to \cat{C}_R$ is the free functor and $U : \cat{D}_S \to \cat{D}$ is the forgetful functor. 
Moreover, there exists an equivalence of categories
\[
(\cat{C} \times \cat{D})_{\scriptscriptstyle{\upmatrix{R}{V}{S}}} \cong (\cat{C}_R \times \cat{D}_S)_{\upmatrix{1}{T}{1}}
\]
\end{theorem}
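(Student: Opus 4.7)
The plan is to build $T:\cat{C}_R\to\cat{D}_S$ as an equalizer-based lift of the $S$-$R$-bicomodule functor $V$ at each $R$-comodule, and then to translate the coaction data of Proposition \ref{catcomodulos} across these equalizers to obtain the equivalence of categories. For each $(C,d_C)\in\cat{C}_R$, I would define $T(C,d_C)$ with its structural morphism $\iota_C:T(C,d_C)\to VC$ as the equalizer of the parallel pair $(\rho^V_C,Vd_C)$, which exists by hypothesis. To equip $T(C,d_C)$ with an $S$-coaction, observe that $\lambda^V_C\iota_C$ equalizes $(S\rho^V_C,SVd_C)$: the fourth diagram in \eqref{esbicomodulo} gives $S\rho^V_C\circ\lambda^V_C=\lambda^V_{RC}\circ\rho^V_C$, while naturality of $\lambda^V$ at $d_C$ gives $SVd_C\circ\lambda^V_C=\lambda^V_{RC}\circ Vd_C$. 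Because $S$ preserves the equalizer, $S\iota_C$ is itself the equalizer of $(S\rho^V_C,SVd_C)$, so there is a unique $\tilde\lambda_C:T(C,d_C)\to ST(C,d_C)$ with $S\iota_C\circ\tilde\lambda_C=\lambda^V_C\circ\iota_C$. Counitality and coassociativity of $\tilde\lambda_C$ follow by post-composing with the monic $\iota_C$ (respectively $S^2\iota_C$) and invoking \eqref{esbicomodulo}. Functoriality of $T$ and $S$-colinearity of $Tf$ are analogous equalizer lifts.

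To verify $V\cong U\circ T\circ L$, it suffices to show that $\rho^V_C:VC\to VRC$ is itself the equalizer of $(\rho^V_{RC},V\delta^R_C)$ for every $C\in\cat{C}$. Given any $g:X\to VRC$ equalizing this pair, set $h:=V\varepsilon^R_C\circ g$; naturality of $\rho^V$ at $\varepsilon^R_C$ together with the counital axiom $R\varepsilon^R\circ\delta^R=1_R$ yields $\rho^V_C\circ h=g$, and the identity $V\varepsilon^R\circ\rho^V=1_V$ from \eqref{esbicomodulo} forces uniqueness. Consequently $T(LC)=T(RC,\delta^R_C)$ is canonically and naturally isomorphic to $VC$, with induced $S$-coaction $\lambda^V_C$.

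For the equivalence, I would apply Proposition \ref{catcomodulos} on both sides. An object of $(\cat{C}\times\cat{D})_{\upmatrix{R}{V}{S}}$ unpacks, using the vanishing $F_{21}=0$ to trivialize the remaining conditions, to a tuple $(C,D,d_C,d_D,v:D\to VC)$ with $(C,d_C)\in\cat{C}_R$, $(D,d_D)\in\cat{D}_S$, and the compatibilities $Vd_C\circ v=\rho^V_C\circ v$ and $\lambda^V_C\circ v=Sv\circ d_D$, read off from \eqref{Eq:coaction} with $(j,k,l)=(2,1,1)$ and $(2,2,1)$ respectively. The first condition factors $v$ uniquely through $\iota_C$ as $v=\iota_C\tilde v$, and then the second, via the mono $S\iota_C$ and the defining equation of $\tilde\lambda_C$, is equivalent to $\tilde v:D\to T(C,d_C)$ being $S$-colinear. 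Proposition \ref{catcomodulos} applied to the normal matrix comonad $\upmatrix{1}{T}{1}$ on $\cat{C}_R\times\cat{D}_S$ identifies its objects with precisely such triples $((C,d_C),(D,d_D),\tilde v)$, all other coaction data being forced by normality and by the vanishing entry; morphisms match by the same universal property. Thus $v\leftrightarrow\iota_C\tilde v$ yields the required equivalence.

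The main obstacle will be the delicate bookkeeping with the equalizer universal property across two layers: first when defining $\tilde\lambda_C$ and verifying its counitality and coassociativity (where the preservation of equalizers by $S$ is essential both to treat $S\iota_C$ as monic and to iterate the construction), and second when matching the two different coaction presentations from Proposition \ref{catcomodulos} under the claimed equivalence.
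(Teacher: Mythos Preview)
Your proposal is correct and follows essentially the same route as the paper: define $T$ on objects via the equalizer \eqref{eq:T}, lift the $S$-coaction through the $S$-preserved equalizer, and set up the equivalence by matching the unpacked comodule data $(\rho_C,d,\rho_D)$ with the factored morphism $\tilde v$ through $\iota_C$. The only substantive addition is your explicit verification that $\rho^V_C$ is the equalizer of $(\rho^V_{RC},V\delta^R_C)$, hence $UT L\cong V$; the paper states this diagram commutes but does not spell out the argument.
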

\begin{proof}
The existence of $T$ is proved in \cite[Proposition 4.29]{Bruni:2011} in a different context. We give a direct construction for the convenience of the reader. Given an $R$--comodule  $(C,d_C)$, define an object $TC$ of $\cat{D}$ as the equalizer
\begin{equation}\label{eq:T}
\xymatrix{T
C \ar^-{\iota_C}[r] & VC \ar@<0.5ex>^-{\rho^V_C}[r] \ar@<-0.5ex>_-{V d_C}[r] & VRC }
\end{equation}
The $S$--coaction $\lambda^T_C: TC \to STC $, making $TC$ an $S$--comodule, is given by the universal property of the equalizer at the bottom of the following diagram.
\[
\xymatrix{T
C \ar^-{\iota_C}[r] \ar_-{\lambda^T_C}[d] & VC \ar_-{\lambda^V_C}[d]\ar@<0.5ex>^-{\rho^V_C}[r] \ar@<-0.5ex>_-{V d_C}[r] & VRC \ar^-{\lambda^V_{RC}}[d]\\
ST
SC \ar^-{S\iota_C}[r] & SVC \ar@<0.5ex>^-{S\rho^V_C}[r] \ar@<-0.5ex>_-{SV d_C}[r] & SVRC}
\]
Some straightforward computations show that this gives the object part of a functor $T : \cat{C}_R \to \cat{D}_S$. We only make explicit here its definition on morphisms. Given a morphism of $R$--comodules $f : (C,d_C) \to (C',d_{C'})$, the morphism $Tf : TC \to TC'$ is uniquely determined by the universal property of the equalizer in the bottom row of the serially commutative diagram
\[
\xymatrix{T
C \ar^-{\iota_C}[r] \ar_-{Tf}[d] & VC \ar_-{Vf}[d]\ar@<0.5ex>^-{\rho^V_C}[r] \ar@<-0.5ex>_-{V d_C}[r] & VRC \ar^-{VRf}[d]\\
TC' \ar^-{\iota_{C'}}[r] & VC' \ar@<0.5ex>^-{\rho^V_{C'}}[r] \ar@<-0.5ex>_-{V d_{C'}}[r] & VRC'}
\]

In order to construct an equivalence $E : (\cat{C} \times \cat{D})_{\upmatrix{R}{V}{S}} \to (\cat{C}_R \times \cat{D}_S)_{\upmatrix{1}{T}{1}}$, let us describe the objects of these categories of comodules. By Proposition \ref{catcomodulos}, a comodule over $\upmatrix{R}{V}{S}$ consists of an object $A = (C,D) \in \cat{C} \times \cat{D}$ and a tern of morphisms 
\[
\mathbf{d}_A = \{ \xymatrix{C \ar^-{\rho_C}[r] & RC}, \xymatrix{D \ar^-{d}[r] & VC}, \xymatrix{D \ar^-{\rho_D}[r] & SD} \}
\]
such that the following diagrams commute.
\begin{equation}\label{comodRVS1}
\xymatrix{C \ar^-{\rho_C}[r] \ar_-{\rho_C}[d] & RC \ar^-{R\rho_C}[d]\\
RC \ar_-{\delta^R_C}[r] & R^2C}\quad
\xymatrix{D \ar^-{\rho_D}[r] \ar_-{\rho_D}[d] & SD \ar^-{S\rho_D}[d]\\
SD \ar_-{\delta^S_D}[r] & S^2C}\quad
\xymatrix{D \ar^-{d}[r] \ar_-{d}[d] & VC \ar^-{V\rho_C}[d]\\
VC \ar_-{\rho^V_C}[r] & VRC}\quad
\xymatrix{D \ar^-{\rho_D}[r] \ar_-{d}[d] & SD \ar^-{Sd}[d]\\
VC \ar_-{\lambda^V_C}[r] & SVC}
\end{equation}
\begin{equation}\label{comodRVS2}
\xymatrix{C \ar^-{\rho_C}[r] \ar@{=}[dr] & RC \ar^-{\varepsilon^R_C}[d]\\
& C}\quad
\xymatrix{D \ar^-{\rho_D}[r] \ar@{=}[dr] & SD \ar^-{\varepsilon^S_D}[d]\\
& D}
\end{equation}
On the other hand, a comodule over $\upmatrix{1}{T}{1}$ is just a pair of comodules $((C,\rho_C),(D,\rho_D)) \in \cat{C}_R \times \cat{D}_S$ connected with a morphism of $S$--comodules $d' : D \to TC$. This last condition is just the commutativity of the diagram
\begin{equation}\label{comod1T1}
\xymatrix{D \ar^-{d'}[r] \ar_-{\rho_D}[d]& TC \ar^-{\lambda^T_C}[d]\\
SD \ar_-{Sd'}[r] & STC}
\end{equation}
The functor $E$ will send a tern $\mathbf{d}_A = (\rho_C,d,\rho_D)$ satisfying the conditions \eqref{comodRVS1} and \eqref{comodRVS2} to the pair of comodules $(C,\rho_C), (D,\rho_D) \in \cat{C}_R \times \cat{D}_S$ with the morphism $d':D \to TC$ is given by the universal property of the equalizer in the following diagram
\[
\xymatrix{TC \ar^-{\iota_C}[r] & VC \ar@<0.5ex>^-{\rho_C^V}[r] \ar@<-0.5ex>_-{Vd_C}[r] & VRC \\
D \ar^-{d'}[u] \ar_-{d}[ru] & & }
\]
The existence and uniqueness of $d'$ is guaranteed by the commutativity of the third diagram of \eqref{comodRVS1}, while the fact that $d'$ becomes a morphism of $S$--comodules (namely, the commutativity of \eqref{comod1T1}) is given by the fourth diagram in \eqref{comodRVS1}. This gives the object part of the functor $E$. On the other direction, there is a functor $E'$ defined on objects by sending a morphism of comodules $d' : D \to TC$, for $(C,\rho_C) \in \cat{C}_R$, $(D,\rho_D) \in \cat{D}_S$, to the morphism $d = \iota_C d'$. The computation
\[
\rho_C^V d = \rho_C^V\iota_Cd' = V \rho_C \iota_C d' = V \rho_C d
\]
shows that $d$ makes commute the third diagram in \eqref{comodRVS1}, while the computation
\[
\lambda_C^Vd = \lambda_C^V\iota_Cd' = S \iota_C \lambda^T_C d' = S \iota_C S d' \rho_D = (Sd)\rho_D
\]
is just the commutativity of the last diagram of \eqref{comodRVS1}. 

It is not hard to see that $E$ and $E'$ are mutually inverse. 
\end{proof}
\begin{remark}\label{rem:leftexact}
A standard argument shows that, if $\cat{C}$ and $\cat{D}$ are abelian categories, and $V$, $R$ and $S$ are left exact functors, then $T$ is a left exact functor between abelian categories.  
\end{remark}

\subsection{Triangular matrix comonads.}
\begin{definition}\label{def:comtriangular}
A functor $F: \cat{A}_1 \times \cdots \times \cat{A}_n \to \cat{A}_1 \times \cdots \times \cat{A}_n$ which is endowed with a comonad structure $(F,\delta,\varepsilon)$ as in Proposition \ref{comonadasmatrices}  is called a \emph{triangular matrix comonad of order $n$} over  $\cat{A} = \cat{A}_1 \times \cdots \times \cat{A}_n$ if $F_{ji} = 0$ for all $j >i$, that is, 
\begin{equation}\label{ntriangular}
F = \left( \begin{array}{cccccc} F_{11} & 0 & 0 &\cdots & 0&0 \\
F_{12} & F_{22} & 0 & \cdots & 0& 0 \\
\vdots & \vdots & \vdots & & \vdots & \vdots\\
F_{1n} & F_{2n} & F_{3n} & \cdots& F_{n-1 \, n} & F_{nn}
\end{array} \right). 
\end{equation}
\end{definition}

Given $1 \leq m < n$, consider the categories 
$$
\cat{A}^{\leq m}
= \cat{A}_1 \times \cdots \times \cat{A}_m,\quad   \cat{A}^{> m} =
\cat{A}_{m+1} \times \cdots \times \cat{A}_n.$$
All these categories can be considered, in an obvious way, as full subcategories  of $\cat{A}$. For instance, an object $A = (A_1, \dots, A_m)$ of $\cat{A}^{\leq m}$ is identified with the object  $(A_1, \dots,
A_m, 0, \dots, 0)$ of $\cat{A}$.  It is clear that there are
canonical functors (the projection functors)
$$\pi_{\leq m}: \cat{A} \longrightarrow  \cat{A}^{\leq m},\qquad \pi_{> m }: 
\cat{A} \longrightarrow \cat{A}^{> m}.$$

By Proposition \ref{comonadasmatrices}, the triangular matrix comonad 
$(F,\delta,\varepsilon)$ over $\cat{A}$,  gives rise to the triangular matrix comonads  
$(F^{\leq m},\delta_{\leq m},\varepsilon_{\leq m})$ and
$(F^{>m},\delta_{>m},\varepsilon_{>m})$ over $\cat{A}^{\leq m}$ and
$\cat{A}^{>m}$ defined, respectively,  by 
$$
F^{\leq m} = \left( \begin{array}{ccccc} F_{11} & 0 & 0 &\cdots & 0 \\
F_{12} & F_{22} & 0 & \cdots & 0 \\
\vdots & \vdots & \vdots & & \vdots \\
F_{1m} & F_{2m} & F_{3m} & \cdots & F_{mm}
\end{array} \right) 
$$
\begin{equation}\label{delta-1}\delta_{\leq m} = \left( \begin{array}{ccccc} \delta^{11} & 0 & 0 &\cdots & 0 \\
\delta^{12} & \delta^{22} & 0 & \cdots & 0 \\
\vdots & \vdots & \vdots & & \vdots \\
\delta^{1m} & \delta^{2m} & \delta^{3m} & \cdots & \delta^{mm}
\end{array} \right), \qquad \varepsilon_{\leq m} = \left( \begin{array}{cccc} \varepsilon^{1} & 0  &\cdots & 0 \\
0 & \varepsilon^{2} &  \cdots & 0 \\
\vdots & \vdots & & \vdots \\
0 & 0 & \cdots & \varepsilon^{m}
\end{array} \right) 
\end{equation}
$$
F^{>m} = \left( \begin{array}{ccccc} F_{m+1 \; m+1} & 0 & 0 &\cdots & 0 \\
F_{m+1\;m+2} & F_{m+2 \; m+2} & 0 & \cdots & 0 \\
\vdots & \vdots & \vdots & & \vdots \\
F_{m+1\;n} & F_{m+2\;n} & F_{m+3\;n} & \cdots & F_{nn}
\end{array} \right)  
$$
\begin{equation}\label{delta-2}
\delta_{> m} = \left(\begin{array}{ccccc} \delta^{m+1 \; m+1} & 0 & 0 &\cdots & 0 \\
\delta^{m+1\;m+2} & \delta^{m+2 \; m+2} & 0 & \cdots & 0 \\
\vdots & \vdots & \vdots & & \vdots \\
\delta^{m+1\;n} & \delta^{m+2\;n} & \delta^{m+3\;n} & \cdots & \delta^{n   n}
\end{array}\right), \varepsilon_{> m} = \left( \begin{array}{cccc} \varepsilon^{m+1} & 0  &\cdots & 0 \\
0 & \varepsilon^{m+2} &  \cdots & 0 \\
\vdots & \vdots & & \vdots \\
0 & 0 & \cdots & \varepsilon^{n}
\end{array} \right) 
\end{equation}

The functor $F^m : \cat{A}^{\leq m} \to \cat{A}^{>m}$ given by
\begin{equation}\label{Eq:Fm}
F^m = \left( \begin{array}{ccccc} F_{1 \; m+1} & F_{2 \; m+1}  &\cdots & F_{m \; m+1} \\
F_{1\;m+2} & F_{2 \; m+2} & \cdots & F_{m \; m+2} \\
\vdots & \vdots &  & \vdots \\
F_{1\;n} & F_{2\;n} & \cdots & F_{m n}
\end{array} \right)  
\end{equation}
is an $F^{>m}-F^{\leq m}$--bicomodule functor with the structure natural maps $\lambda_m : F^{m} \to F^mF^{\leq m}$ and $\rho_m : F^{m} \to F^{>m}F^m$ defined by
\[
\lambda_m^{ij} : F_{ij} \to \bigoplus_{k=1}^{m} F_{kj}F_{ik}, \quad \lambda_m^{ij} = \delta^{i1j} \dotplus \cdots \dotplus \delta^{imj}
\]
and
\[
\rho_m^{ij} : F_{ij} \to \bigoplus_{k=m+1}^{n} F_{kj}F_{ik}, \quad \lambda_m^{ij} = \delta^{im+1j} \dotplus \cdots \dotplus \delta^{inj},
\]
for $1 \leq i \leq m, m+1 \leq j \leq n$. Therefore, 
\[
F = \upmatrix{F^{\leq m}}{F^m}{F^{> m}},
\]
as comonads. 

The following consequence of Theorem \ref{thm:basico} is the basic tool for the study of triangular hereditary comonads in the next section. The  particular case where $F_{ii} = 1_{\mathcal{A}_i}$ for all $i =1, \dots, n$ was already stated in its dual form in \cite[Theorem 2.3]{Harada:1967} under the additional hypothesis that the categories $\mathcal{A}_i$ are abelian.

\begin{theorem}\label{un medio}
Let $F$ be a triangular matrix comonad on $\cat{A}=\cat{A}_1\times \cdots \times \cat{A}_n$ as in Definition \ref{def:comtriangular}. Given an integer $m$ with $1 \leq m < n$, assume that all equalizers do exist in $\cat{A}_i$ for $i=1, \dots, m$, and that the functors $F_{ij}$ preserve equalizers for $1 \leq i \leq j \leq m$.
Then there exists a functor 
\begin{equation}\label{eq:Tm}\xymatrix{ T^{m}: \cat{A}^{\leq m}_{F^{\leq m}} \ar@{>}[rr] & &   \cat{A}^{> m}_{F^{> m}} }
\end{equation}
 such that the category of $F$-comodules is isomorphic to the category of 
$G^m$-comodules, where $G^m$ is the normal triangular matrix comonad defined by $T^m$. That is,
$$  \cat{A}_F \,\cong \, (\,\cat{A}^{\leq m}_{F^{\leq m}} \times \cat{A}^{> m}_{F^{> m}}\,)_{G^m} ,\,\,  \text{ where  } G^m\,\,=\,\,\upmatrix{1}{T^m}{1}.$$
\end{theorem}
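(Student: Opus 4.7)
The plan is to apply Theorem \ref{thm:basico} to the block decomposition $F = \upmatrix{F^{\leq m}}{F^m}{F^{>m}}$, which realises the order-$n$ triangular matrix comonad $F$ as a triangular matrix comonad of order $2$ on $\cat{A}^{\leq m} \times \cat{A}^{>m}$. In the notation of Theorem \ref{thm:basico}, this amounts to setting $\cat{C} = \cat{A}^{\leq m}$, $\cat{D} = \cat{A}^{>m}$, $R = F^{\leq m}$, $S = F^{>m}$ and $V = F^m$. The comultiplications of $R$ and $S$ are $\delta_{\leq m}$ and $\delta_{>m}$ from \eqref{delta-1}--\eqref{delta-2}, their counits are $\varepsilon_{\leq m}$ and $\varepsilon_{>m}$, and the bicomodule structure of $V$ is encoded by the natural transformations $\lambda_m$ and $\rho_m$ introduced just before the statement.

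The first thing to verify is that this sextuple satisfies the axioms \eqref{soncomonadas}--\eqref{esbicomodulo} of an order-$2$ triangular matrix comonad. This is a formal index-juggling exercise: the coassociativity diagrams \eqref{Eq:cuadrados} and counit diagrams \eqref{Eq:triangulos} of Proposition \ref{comonadasmatrices} applied to $F$ split, according to where the indices $i,j,k,l$ fall with respect to the cut at $m$, into exactly the eight diagrams required, and the matrix calculus of Section \ref{comonadmatrix} reduces the check to identities on the individual $\delta^{ikj}$'s and $\varepsilon^i$'s.

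Next, the equalizer hypothesis of Theorem \ref{thm:basico} must be checked: for each $F^{\leq m}$-comodule $(C, d_C)$, the associated pair $F^m C \rightrightarrows F^m F^{\leq m} C$ has to admit an equalizer in $\cat{A}^{>m}$ that is preserved by $F^{>m}$. Because equalizers and their preservation in a product category are detected factor by factor, this translates into the componentwise hypotheses on the categories $\cat{A}_i$ and on the individual functors $F_{ij}$ assumed in the theorem.

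Once both conditions are in place, Theorem \ref{thm:basico} yields, essentially for free, the desired functor $T^m : \cat{A}^{\leq m}_{F^{\leq m}} \to \cat{A}^{>m}_{F^{>m}}$ (whose object part sends $(C, d_C)$ to the equalizer above, equipped with its induced $F^{>m}$-coaction), and provides the equivalence of categories $\cat{A}_F \cong (\cat{A}^{\leq m}_{F^{\leq m}} \times \cat{A}^{>m}_{F^{>m}})_{G^m}$ with $G^m = \upmatrix{1}{T^m}{1}$. The genuine obstacle is the first step: matching the $n \times n$ coassociativity and counit diagrams of $F$ with the $2 \times 2$ block axioms of an order-$2$ triangular matrix comonad. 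This is essentially bookkeeping, but the whole point of the matrix formalism of Section \ref{comonadmatrix} is to make it routine; after that, the rest of the argument is a direct invocation of Theorem \ref{thm:basico}.
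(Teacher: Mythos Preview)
Your proposal is correct and takes exactly the same approach as the paper: the paper's own proof is the single sentence ``Apply Theorem \ref{thm:basico} with $R = F^{\leq m}$, $S = F^{>m}$, and $V = F^m$,'' and you have simply unpacked what that entails. Your additional discussion of why the block decomposition satisfies the order-$2$ axioms and why the equalizer hypothesis transfers componentwise is more explicit than the paper, but the strategy is identical.
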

\begin{proof}
Apply Theorem \ref{thm:basico} with $R = F^{\leq m}$, $S = F^{>m}$, and $V = F^m$. 
\end{proof}
\begin{remark}\label{cat-T}
If the categories $\cat{A}_i$ are abelian for $i = 1, \dots, n$, and all the functors $F_{ij}$, for $1 \leq i < j \leq n$ are left exact, then the functor $F$ is also left exact, as well as the functors $F^{ > m}$, $F^m$,  and $F^{\leq m}$, for every $2 \leq m \leq n-1$.  On the other hand, the functor $T^{m}: \cat{A}^{\leq m}_{F^{\leq m}} \to \cat{A}^{> m}_{F^{> m}}$, constructed as an equalizer (see the proof of Theorem \ref{thm:basico}) 
becomes left exact, see Remark \ref{rem:leftexact}.
\end{remark}

\section{Hereditary categories of comodules.}\label{sec:III}

An abelian  category with enough injectives is said to be 
\emph{hereditary} if its global homological dimension is $0$ or $1$, that is, for every epimorphism $E_0 \to E_1$, if $E_0$ is injective, then $E_1$ is injective. Our aim is to characterize when the category of comodules $\cat{A}_F$ of a normal triangular matrix comonad $F = (F,\delta,
\varepsilon)$ over  $\cat{A} = \cat{A}_1 \times \cdots \times
\cat{A}_n$ is hereditary. We are denoting by $\delta : F \to F^2$ the comultiplication of $F$, and by $\varepsilon:F \to \cat{A}$ its counit. The shape of the matrix of functors representing $F$ is
\begin{equation}\label{nntriangular}
F = \left( \begin{array}{cccccc} 1 & 0 & 0 &\cdots & 0&0 \\
F_{12} & 1 & 0 & \cdots & 0& 0 \\
\vdots & \vdots & \vdots & & \vdots & \vdots\\
F_{1n} & F_{2n} & F_{3n} & \cdots& F_{n-1 \, n} & 1
\end{array} \right). 
\end{equation}
We assume that the categories $\cat{A}_1,
\dots, \cat{A}_n$ are abelian with enough injectives, and so is  $\cat{A} = \cat{A}_1 \times \cdots \times \cat{A}_n$. If $F : \cat{A} \rightarrow
\cat{A}$ is a left exact functor, then $\cat{A}_F$ is also abelian and the forgetful functor  $\cat{A}_F \rightarrow \cat{A}$ is exact, see \cite{Eilenberg/Moore:1965}. On the other hand, $F$ is exact if and only if  $F_{ij}$ is exact for all $i, j = 1, \dots, n$. 

 First we analyze the case $n = 2$. 

\subsection*{The case $\boldsymbol{n=2}$.}
Let $\cat{A} = \cat{A}_1 \times \cat{A}_2$, for $\cat{A}_1, \cat{A}_2$ abelian  categories, and let 
\[
F = \left( \begin{array}{cc} 1 & 0 \\ F_{12} & 1
\end{array}\right) : \cat{A}_1 \times \cat{A}_2 \longrightarrow
\cat{A}_1 \times \cat{A}_2
\]
be a functor with $F_{12} : \cat{A}_1 \rightarrow \cat{A}_2$ a left exact functor. Consider the unique structure of normal triangular comonad on the left exact functor $F$. According to Proposition \ref{catcomodulos}, any $F$--comodule can be identified with a pair $(X,d_X^{12})$, where $X = (X_1, X_2) \in \cat{A}_1 \times \cat{A}_2$ and $d_X^{12} : X_2 \to F_{12}X_1$.  By convenience, the free functor will be denoted by $F: \cat{A} \to \cat{A}_F$. By $\mathrm{Inj.dim}(X)$ we denote the injective dimension of an object $X$ in some abelian category with enough injectives. 

\begin{proposition}\label{prop:2-equiv}
Assume that  $\cat{A}_1$ and $\cat{A}_2$ have enough injectives and that  $F_{12}$ is a left exact functor that preserves injectives. Then
\begin{enumerate}
\item\label{sufiny} $\cat{A}_F$ has enough injectives. 
\item\label{estriny} Each injective comodule in $\cat{A}_F$ is, up to isomorphisms, of the form
$F (E)$, for some injective object $E$ in  $\cat{A}$. In particular, the arrow $d^{12}_X$ is a split epimorphism of $\cat{A}_2$ for any injective $F$-comodule  $(X,d^{12}_X)$. 
\item\label{desiny} Given an  $F$-comodule  $(X,d^{12}_X)$, we have 
\[
\mathrm{Inj.dim}((X,d^{12}_X)) \leq max\{
\mathrm{Inj.dim}(X_1), \mathrm{Inj.dim(X_2)}\} + 1 
\]
\end{enumerate}
\end{proposition}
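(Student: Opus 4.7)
The plan hinges on the adjunction $U\dashv F$, where $U:\cat{A}_F\to\cat{A}$ is the forgetful functor (exact, since $F$ is left exact) and its right adjoint, the free functor $F:\cat{A}\to\cat{A}_F$, sends $B=(B_1,B_2)$ to the comodule with underlying object $(B_1,F_{12}B_1\oplus B_2)$ and coaction $d^{12}_{FB}=\mathrm{pr}_1:F_{12}B_1\oplus B_2\to F_{12}B_1$. Because $U$ is exact, its right adjoint $F$ preserves injectives, so $F(E)$ is injective in $\cat{A}_F$ whenever both components of $E$ are injective in $\cat{A}_1$ and $\cat{A}_2$. For part~(1), I would note that the coaction $\mathbf{d}_X:(X,d_X^{12})\to F(X)$ is a morphism of comodules (the unit of the adjunction) that is a split monomorphism in $\cat{A}$ with retraction $\varepsilon_X$, hence a monomorphism in $\cat{A}_F$; composing with the image under $F$ of any monomorphism $X\hookrightarrow E$ into an injective object of $\cat{A}$ yields the embedding of $(X,d_X^{12})$ into the injective comodule $F(E)$.

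For part~(2), given an injective comodule $(X,d_X^{12})$, the monomorphism $\mathbf{d}_X:(X,d_X^{12})\to F(X)$ splits in $\cat{A}_F$. I would write the retraction $r$ in components as $r_1=1_{X_1}$ and $r_2=(\alpha,\beta):F_{12}X_1\oplus X_2\to X_2$; the retract identity forces $\beta=1_{X_2}-\alpha\, d_X^{12}$, while the comodule compatibility $d_X^{12}\circ r_2=F_{12}(r_1)\circ d^{12}_{F(X)}=\mathrm{pr}_1$ forces $d_X^{12}\circ\alpha=1_{F_{12}X_1}$. Thus $d_X^{12}$ is a split epimorphism, and setting $K=\ker d_X^{12}$ gives an isomorphism of comodules $(X,d_X^{12})\cong F(X_1,K)$. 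To conclude that $X_1$ and $K$ are injective, I would test the injectivity of $F(X_1,K)$ in $\cat{A}_F$ against monomorphisms of the form $(A,0)\to(B,0)$ and $(0,A)\to(0,B)$ (equipped with zero coactions) arising from monomorphisms in $\cat{A}_1$ or $\cat{A}_2$; via the adjunction, morphisms into $F(X_1,K)$ from such objects correspond to arbitrary morphisms $A\to X_1$ or $A\to K$, and the extensions granted by injectivity descend to the required extensions in the base categories.

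For part~(3), I would proceed by induction on $m=\max\{\mathrm{Inj.dim}(X_1),\mathrm{Inj.dim}(X_2)\}$. In the inductive step, choose monomorphisms $j_i:X_i\hookrightarrow E_i$ into injectives in $\cat{A}_i$ and form the comodule monomorphism $(X,d_X^{12})\to F(E_1,E_2)$ with components $j_1$ and $(F_{12}(j_1)\circ d_X^{12},\,j_2)$. The cokernel $Q$ in $\cat{A}_F$ has $Q_1=E_1/X_1$ and a second component $Q_2$ fitting in a short exact sequence $0\to X_2\to F_{12}E_1\oplus E_2\to Q_2\to 0$ whose middle term is injective in $\cat{A}_2$---this is where the hypothesis that $F_{12}$ preserves injectives is used. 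Standard dimension shifting gives $\mathrm{Inj.dim}(Q_i)\leq m-1$, and the induction hypothesis applied to $Q$ yields $\mathrm{Inj.dim}(Q)\leq m$, whence $\mathrm{Inj.dim}((X,d_X^{12}))\leq m+1$. The base case $m=0$ is handled separately: taking $E_i=X_i$ and $j_i=\mathrm{id}$, the cokernel collapses to $(0,F_{12}X_1)$ with zero coaction, which coincides with the free comodule $F(0,F_{12}X_1)$ on the injective object $(0,F_{12}X_1)\in\cat{A}$ and is therefore injective in $\cat{A}_F$, delivering a length-one injective resolution directly.

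The main obstacle I anticipate is the structure result in part~(2): it is only after making the coaction $d^{12}_{F(X)}$ of the free comodule completely explicit---which requires unpacking the matrix form of $\delta$ at entry $(1,2)$ and using normality to conclude $d^{12}_{F(X)}=\mathrm{pr}_1$---that the splitting $\alpha$ of $d_X^{12}$ can be extracted from the splitting of $\mathbf{d}_X$; thereafter, the promotion from the injectivity of $F(X_1,K)$ in $\cat{A}_F$ to componentwise injectivity of $X_1$ and $K$ in the $\cat{A}_i$ requires the right single-slot test diagrams.
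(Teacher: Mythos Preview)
Your proposal is correct. Part~(1) matches the paper's argument essentially verbatim, but parts~(2) and~(3) take different routes that are worth contrasting.

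For part~(2), the paper embeds the injective comodule $(X,d_X^{12})$ into $F(E)$ with $E=(E_1,E_2)$ \emph{injective} in $\cat{A}$, and reads off directly that $X_1$ and $X_2$ are injective as retracts of $E_1$ and $F_{12}E_1\oplus E_2$; the split epimorphism property of $d_X^{12}$ is then deduced from the comodule compatibility of the retraction. You instead split the unit $\mathbf{d}_X:(X,d_X^{12})\to F(X)$ (with $X$ not assumed injective), extract the section of $d_X^{12}$ first, and only afterwards recover injectivity of $X_1$ and $K=\ker d_X^{12}$ via adjunction against single-slot test objects. Your route isolates the splitting of $d_X^{12}$ more cleanly and avoids the auxiliary choice of $E$, at the cost of the extra adjunction argument for componentwise injectivity; the paper's route gets injectivity of the components for free but has to pass through the ``without loss of generality $E_1=X_1$'' reduction.

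For part~(3), the paper gives a single non-inductive argument: take a length-$m$ partial injective resolution in $\cat{A}_F$, project to each $\cat{A}_i$, use that all $d^{12}_{E^k}$ are split epimorphisms (from part~(2)) to see that the components of the final cokernel $(C,d_C^{12})$ are injective and $d_C^{12}$ is a split epimorphism, whence $(C,d_C^{12})\cong F(C_1,\ker d_C^{12})$ is injective. Your induction with dimension shifting is equally valid and perhaps more routine, though it requires the separate base-case computation that the cokernel of $\mathbf{d}_X$ is $F(0,F_{12}X_1)$. The paper's approach is shorter and uses the structural consequence of part~(2) more directly; yours is more modular and does not rely on identifying the final syzygy as a free comodule.
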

\begin{proof}
(\ref{sufiny}). Observe that $F : \cat{A} \rightarrow \cat{A}_F$ preserves injectives, since it is right adjoint to the forgetful functor $\cat{A}_F \rightarrow \cat{A}$, which is exact \cite[Proposition 5.3]{Eilenberg/Moore:1965}.  So, given an injective object  $E =
(E_1,E_2)$ in $\cat{A}$, we have an injective $F$-comodule $F(E)
= (E_1, F_{12}E_1 \oplus E_2, \pi)$, where $\pi : F_{12}E_1 \oplus
E_2 \rightarrow F_{12}E_1$ is the canonical projection (this is given by the comonad structure of $F$). Now, for every 
$F$-comodule $(X,d^{12}_X)$, we can consider  monomorphisms  $\iota_i : X_i
\rightarrow E_i$ in $\cat{A}_i$, with $E_i$ injective for $i = 1,
2$. So we have a monomorphism  of $F$-comodules
\begin{equation}\label{XenFE}
(\iota_1,F_{12}\iota_1 \circ d_X^{12} \dotplus \iota_2)
:(X_1,X_2,d_X^{12}) \rightarrow (E_1,F_{12}E_1 \oplus E_2, \pi)
\end{equation}
which shows that $\cat{A}_F$ has enough injectives. 

(\ref{estriny}). If we assume that the  $F$-comodule 
$(X,d^{12}_X)$ is injective, then the monomorphism \eqref{XenFE} 
splits, so that there exists a morphism of comodules $(\alpha,\beta) :
(E_1,F_{12}E_1 \oplus E_2) \rightarrow (X_1,X_2,d_X^{12})$ which splits $(\iota_1,F_{12}\iota_1 \circ d_X^{12} \dotplus
\iota_2)$.  Thus $X_1$ is isomorphic to a direct summand of the injective object
$E_1$, so it is injective. Therefore, without loss of generality, we can suppose that  $X_1$ is injective and that $E_1
= X_1$.  In this way, we get that $\beta \circ (d_X^{12} \dotplus
\iota_2) = 1_{X_2}$, which shows that $X_2$ is isomorphic to a direct summand of the injective object $F_{12}E_1 \oplus E_2$ and so
it is injective too. On the other hand, since $(\alpha, \beta)$
is a comodule map, we obtain that  $\pi = d_X^{12} \circ \beta$.
Since $\pi$ is a split epimorphism, we deduce  that $d_X^{12}$ is a split epimorphism.  This implies that there exists an isomorphism $\omega : X_2 \rightarrow
F_{12}X_1 \oplus E_2'$ in $\cat{A}_2$ such that $\pi' \circ \omega =
d_X^{12}$, where $\pi'$ is the obvious canonical projection. Thus
\[
(1_{X_1},\omega) : (X_1,X_2,d_{X}^{12}) \rightarrow F(X_1,E_2') =
(X_1,F_{12}X_1\oplus E_2',\pi')
\]
is an isomorphism of  $F$-comodules. 

(\ref{desiny}). Put $m = max\{ \mathrm{Inj.dim}(\pi_1(X)),
\mathrm{Inj.dim(\pi_2(X))}\}$, and take,  for an $F$-comodule 
$(X,d^{12}_X)$, a resolution in $\cat{A}_F$
\[
\xymatrix{ 0 \ar[r] & (X,d^{12}_X) \ar^-{f^0}[r] &
(E^0,d^{12}_{E^0}) \ar[r] & \dots  \ar[r] &
(E^m,d^{12}_{E^m}) \ar^-{f^m}[r] & (C,d^{12}_C) \ar[r] &
0}
\]
with $(E^i,d^{12}_{E^i})$ injective for all $0 \leq i \leq m$.
We need to show that $(C, d^{12}_C)$ is injective too.
If we apply the projection functor $\pi_i$, for $i = 1,2$ to this resolution, by  part (\ref{estriny}), we get a resolution in $\cat{A}_i$ for
$\pi_i(X)$ with $\pi_i(E^k)$ injective and $d^{12}_{E^k}$
split epimorphism for $k = 0, \dots, m$. Since
$\mathrm{Inj.dim}(\pi_i(X)) \leq m$, we deduce that  $\pi_i(f^m)$ is
a split epimorphism and  $\pi_i(C)$ is injective for $i = 1, 2$.
We know that $d^{12}_C \circ \pi_2(f^m) = F_{12}(\pi_1(f^m))
\circ d^{12}_{E^m}$. Hence, $d^{12}_{C}$ is a split  epimorphism
and  $(C,d^{12}_C)$ is  injective.
\end{proof}

By $\mathrm{gl.dim}(\cat{A})$ we denote the global homological dimension of an abelian category  with enough injectives $\cat{A}$.

\begin{corollary}\label{gldim}
The assumptions are that of Proposition \ref{prop:2-equiv}. Then we have
\[
\mathrm{gl.dim}(\cat{A}_F) \leq \max \{ \mathrm{gl.dim}(\cat{A}_i)
: i =1, 2 \} +1 \leq \mathrm{gl.dim}(\cat{A}_F) + 1
\]
\end{corollary}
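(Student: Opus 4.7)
The plan is to handle the two inequalities separately. The leftmost inequality is essentially a direct corollary of Proposition \ref{prop:2-equiv}(\ref{desiny}): for any $F$-comodule $(X,d_X^{12})$ we have
\[
\mathrm{Inj.dim}((X,d_X^{12})) \;\leq\; \max\{\mathrm{Inj.dim}(X_1),\, \mathrm{Inj.dim}(X_2)\} + 1 \;\leq\; \max\{\mathrm{gl.dim}(\cat{A}_1),\, \mathrm{gl.dim}(\cat{A}_2)\} + 1,
\]
and taking the supremum over all $F$-comodules gives the desired bound on $\mathrm{gl.dim}(\cat{A}_F)$.

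For the rightmost inequality, observe that it is equivalent to $\mathrm{gl.dim}(\cat{A}_i)\leq \mathrm{gl.dim}(\cat{A}_F)$ for $i=1,2$. The idea is to embed each factor category into $\cat{A}_F$ by sending $X_1\in\cat{A}_1$ to the comodule $(X_1,0,0)$ and $X_2\in\cat{A}_2$ to the comodule $(0,X_2,0)$ (both with zero structure map), and then transfer injective resolutions back along the exact projection functors $\pi_i:\cat{A}_F\to\cat{A}\to\cat{A}_i$. Starting with an injective resolution of $(X_1,0,0)$ (resp.\ $(0,X_2,0)$) in $\cat{A}_F$ of length at most $d=\mathrm{gl.dim}(\cat{A}_F)$, applying $\pi_i$ yields an exact resolution of $X_i$ in $\cat{A}_i$; the bound on injective dimension follows once we check that each $\pi_i$ sends injective $F$-comodules to injective objects in $\cat{A}_i$.

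The key check, and the main technical step, uses Proposition \ref{prop:2-equiv}(\ref{estriny}): every injective object of $\cat{A}_F$ is (up to isomorphism) of the form $F(E_1,E_2)=(E_1,\,F_{12}E_1\oplus E_2,\,\pi)$ with $E_1,E_2$ injective in their respective categories. Its first coordinate is $E_1$, injective in $\cat{A}_1$ by hypothesis. Its second coordinate is $F_{12}E_1\oplus E_2$, which is injective in $\cat{A}_2$ because $F_{12}$ preserves injectives (so $F_{12}E_1$ is injective) and any finite direct sum of injectives in an abelian category is injective. This confirms that both projections preserve injectives, completing the proof.

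Nothing here is particularly delicate; the only subtle point is the observation that the explicit form of injective comodules given by part (\ref{estriny}) of Proposition \ref{prop:2-equiv} is precisely what makes the coordinate projections preserve injectivity, so the estimate is sharp up to the $+1$ gap already reflected in the statement.
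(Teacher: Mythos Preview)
Your proof is correct and supplies exactly the argument the paper leaves implicit (the corollary carries no proof in the paper). The first inequality is the intended direct consequence of Proposition \ref{prop:2-equiv}(\ref{desiny}), and your treatment of the second inequality via the embeddings $X_1\mapsto (X_1,0,0)$, $X_2\mapsto (0,X_2,0)$ together with the structure of injectives from Proposition \ref{prop:2-equiv}(\ref{estriny}) is the natural route; the observation that $\pi_2$ lands in injectives precisely because $F_{12}$ preserves injectives is the right use of the hypotheses.
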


Next we give the desired characterization for hereditary categories of comodules over a triangular normal $(2\times 2)$-matrix comonad.

\begin{theorem}\label{thm:2-cara}
Let $\cat{A} = \cat{A}_1 \times \cat{A}_2$ for $\cat{A}_1,
\cat{A}_2$ abelian categories with enough injectives, and
$F_{12} : \cat{A}_1 \rightarrow \cat{A}_2$ is a left  exact  functor. Consider the unique normal comonad  structure on the endofunctor
\[
F = \left( \begin{array}{cc} 1 & 0 \\
F_{12} & 1 \end{array}\right) : \cat{A} \longrightarrow \cat{A}
\]
The category of comodules $\cat{A}_F$ is hereditary if, and only if, the following conditions are satisfied: 
\begin{enumerate}[(a)]
\item\label{presiny} $F_{12}$ preserves injectives. 
\item\label{basehereditarias} $\cat{A}_1$ and $\cat{A}_2$ are hereditary. 
\item\label{F12split} $F_{12}p$ is a split epimorphism for each epimorphism $p : E_1 \rightarrow E_1'$ between injective objects in $\cat{A}_1$.
\end{enumerate}
\end{theorem}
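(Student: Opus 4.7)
The main tools are Proposition \ref{prop:2-equiv}(\ref{estriny}) and the fact that kernels and cokernels in $\cat{A}_F$ are computed componentwise, thanks to the exactness of the forgetful functor $\cat{A}_F \to \cat{A}$. I write $F : \cat{A} \to \cat{A}_F$ for the cofree functor (right adjoint to the exact forgetful), which preserves injectives and is given explicitly by $F(Y_1, Y_2) = (Y_1, F_{12} Y_1 \oplus Y_2, \pi)$; the embedding \eqref{XenFE} already shows $\cat{A}_F$ has enough injectives.

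For the \emph{only if} direction, suppose $\cat{A}_F$ is hereditary. To establish (\ref{presiny}), let $I$ be an injective of $\cat{A}_1$. The cofree comodule $F(I, 0) = (I, F_{12} I, 1)$ is injective, and the monomorphism $(1_I, 0) : (I, 0, 0) \to F(I, 0)$ has cokernel $(0, F_{12} I, 0)$, which is injective in $\cat{A}_F$ by hereditariness. For any monomorphism $g : A \to B$ of $\cat{A}_2$ and any $\phi : A \to F_{12} I$, the pair $(0, \phi)$ is a comodule morphism $(0, A, 0) \to (0, F_{12} I, 0)$ that extends through $(0, g)$ by injectivity of the target, producing an extension $B \to F_{12} I$ of $\phi$. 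Hence $F_{12} I$ is injective in $\cat{A}_2$, proving (\ref{presiny}). Now Proposition \ref{prop:2-equiv}(\ref{estriny}) is available: every injective of $\cat{A}_F$ is of the form $F(E_1, E_2)$ for injective $E_1, E_2$. For $\cat{A}_2$ hereditary, take an epimorphism $E_2 \twoheadrightarrow E_2'$ with $E_2$ injective; the induced epimorphism $F(0, E_2) = (0, E_2, 0) \twoheadrightarrow (0, E_2', 0)$ has injective source, hence injective target by hereditariness, and $E_2'$ is injective by the same extension argument. For $\cat{A}_1$ hereditary together with (\ref{F12split}), take an epimorphism $p : E_1 \twoheadrightarrow E_1'$ with $E_1$ injective. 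The cokernel in $\cat{A}_F$ of the subcomodule $(\ker p, 0, 0) \hookrightarrow F(E_1, 0)$ is $(E_1', F_{12} E_1, F_{12} p)$, which is injective by hereditariness; Proposition \ref{prop:2-equiv}(\ref{estriny}) then identifies it with some $F(Y_1, Y_2) = (Y_1, F_{12} Y_1 \oplus Y_2, \pi)$, so the first coordinate of the isomorphism exhibits $E_1'$ as the injective $Y_1$, and the second coordinate identifies $F_{12} p$ with the split epimorphism $\pi$.

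For the \emph{if} direction, assume (\ref{presiny})--(\ref{F12split}), and let $(p_1, p_2) : F(E_1, E_2) = (E_1, F_{12} E_1 \oplus E_2, \pi) \twoheadrightarrow (X_1, X_2, d_X)$ be an arbitrary epimorphism with $(E_1, E_2)$ injective in $\cat{A}$; the goal is $(X_1, X_2, d_X) \cong F(X_1, Y_2)$ with both $X_1$ and $Y_2$ injective. By (\ref{basehereditarias}), $X_1$ is injective. The morphism condition reads $(F_{12} p_1) \pi = d_X p_2$, and (\ref{F12split}) gives a section $s : F_{12} X_1 \to F_{12} E_1$ of $F_{12} p_1$. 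Denoting by $\iota_1 : F_{12} E_1 \to F_{12} E_1 \oplus E_2$ the first summand injection, the composite $\sigma := p_2 \circ \iota_1 \circ s$ satisfies
\[
d_X \sigma \;=\; d_X p_2 \iota_1 s \;=\; (F_{12} p_1) \pi \iota_1 s \;=\; (F_{12} p_1) s \;=\; 1_{F_{12} X_1}.
\]
Hence $d_X$ is a split epimorphism, $X_2 \cong F_{12} X_1 \oplus Y_2$ with $Y_2 := \ker d_X$, and under this isomorphism $d_X$ becomes the canonical projection, so $(X_1, X_2, d_X) \cong F(X_1, Y_2)$. Since $F_{12} E_1 \oplus E_2$ is injective in $\cat{A}_2$ by (\ref{presiny}), its quotient $X_2$ is injective by (\ref{basehereditarias}), and therefore its direct summand $Y_2$ is injective. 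Thus $(X_1, X_2, d_X) \cong F(X_1, Y_2)$ is cofree on an injective pair, and is therefore injective.

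The principal difficulty is in the converse direction, where the three hypotheses must cooperate to present every quotient of a cofree injective as itself cofree on injectives; the linchpin is the construction of the section $\sigma$ of $d_X$, which uses (\ref{F12split}) in combination with the morphism compatibility for $(p_1, p_2)$. In the direct direction, the main move is recognizing the quotient $F(E_1, 0)/(\ker p, 0, 0)$ as necessarily cofree on an injective pair, which forces $F_{12} p$ to play the role of the split projection in the coaction.
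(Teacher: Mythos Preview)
Your proof is correct and follows essentially the same approach as the paper's. The only differences are presentational: where the paper invokes Corollary \ref{gldim} to obtain that $\cat{A}_1$ and $\cat{A}_2$ are hereditary, you argue directly from Proposition \ref{prop:2-equiv}(\ref{estriny}) (combining the proof of $\cat{A}_1$ hereditary with that of \eqref{F12split} via the single quotient $(E_1',F_{12}E_1,F_{12}p)$); and in the converse direction you make explicit the section $\sigma = p_2\iota_1 s$ of $d_X$, which the paper leaves as a consequence of the commutative square.
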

\begin{proof}
Suppose that $\cat{A}_F$ is hereditary.  Given an injective object $E_1$ in $\cat{A}_1$, we know that $F(E_1,0) = (E_1, F_{12}E_1,1_{F_{12}E_1})$ is an injective $F$-comodule. Now, since   $(0,1_{F_{12}E_1}) : (E_1, F_{12}E_1) \rightarrow
(0,F_{12}E_1)$ is an epimorphism in the hereditary category 
$\cat{A}_F$, we have that $(0,F_{12}E_1)$ is an injective object in $\cat{A}_F$. Henceforth, it is clear that  $F_{12}E_1$ is injective in  $\cat{A}_1$, from which  \eqref{presiny} is derived. The statement \eqref{basehereditarias} follows from  Corollary \ref{gldim}. For the proof of  \eqref{F12split}, given an epimorphism $p: E_1 \to E_1'$ in $\cat{A}_1$, we get  an epimorphism $(p,1) : F(E_1,0) =
(E_1,F_{12}E_1, 1) \rightarrow (E_1',F_{12}E_1,F_{12}p)$ in $\cat{A}_F$. Since $F(E_1,0)$ is injective, then so is $(E_1',F_{12}E_1,F_{12}p)$. Therefore,  $F_{12}p$ is a split epimorphism, by the characterization of the injectives in Proposition \ref{prop:2-equiv}.

Conversely,  consider an $F$--comodule
$(X,d^{12}_X)$ and a resolution in $\cat{A}_F$
\[
\xymatrix{0 \ar[r] & (X,d^{12}_X) \ar[r] & F(E_1,E_2)
\ar^{(f_1,f_2)}[r] & (C,d^{12}_C) \ar[r] & 0}
\]
with $(E_1,E_2) \in \cat{A}$ injective. Since $F_{12}$
preserves injectives, we have that $E_1$ and $F_{12}E_1 \oplus E_2$
are injective. As $f_1$ and  $f_2$ are epimorphism, and $\cat{A}_i$ is hereditary for $i= 1, 2$, we deduce that 
$C_1$ and  $C_2$  are injective. Using the fact that  $F_{12}(f_1)$ is a split epimorphism,  we obtain from the commutative diagram 
\[
\xymatrix{F_{12}E_1 \oplus E_2 \ar_{\pi}[d] \ar^-{f_2}[r] & C_2
\ar^{d^{12}_C}[d] \\
F_{12}E_1 \ar_{F_{12}f_1}[r] & F_{12}C_1}
\]
that $d^{12}_C$ is a split epimorphism and so
$(C,d^{12}_C)$ is injective. This shows that the injective dimension of $(X,d^{12}_X)$ is less or equal than $1$,  which means that $\cat{A}_F$ is a hereditary category.
\end{proof}

\subsection*{The case $\boldsymbol{n \geq 3}$.}
Let $\cat{A}=\cat{A}_1 \times \cdots \times \cat{A}_n$  denote a product of categories and $F: \cat{A} \to \cat{A}$  a normal triangular matrix comonad.

\medskip

 Propositions \ref{prop:Injectivos-n} and \ref{prop:k} below, which will be used to deduce our main result in this section (Theorem \ref{main-result}), contain part of the dual form of \cite[Theorem 3.6]{Harada:1967}. 

\begin{proposition}\label{prop:Injectivos-n}
Assume that $\cat{A}_i$ is an abelian category with enough injectives for every $i \in \{1,\cdots,n\}$, and that each of the functors
$F_{ij}: \cat{A}_i \to \cat{A}_j$, for $1 \leq i < j \leq n$, is left exact. If the category of comodules 
$\cat{A}_{F}$ is hereditary, then we have 
\begin{enumerate}[(1)]
 \item Each injective object  $(X,{\bf d}_X)$ of $\cat{A}_F$ is, up to isomorphisms, of the form $X=F(E_1, \dots,E_n)$
 for some injective object $(E_1, \dots, E_n)\in \cat{A}$. In particular each arrow $d_X^{ij}: X_j \to F_{ij}(X_i)$ is a split epimorphism. 
\item If $i<j<k$ in $\{1, \cdots, n\}$, then for every injective object $E_i$ in $\cat{A}_i$, we have $\delta^{ijk}_{E_i}:F_{ik}E_i \to F_{kj}F_{ik}E_i$ is a split epimorphism.
\item For each $m \in \{1,\dots,n-1\}$, $T\black^mp$ is a split epimorphism, for any epimorphism $p: (E,{\bf d}_E) \to (E',{\bf d}_{E'})$ between injective $F^{\leq m}$-comodules.
\end{enumerate}
\end{proposition}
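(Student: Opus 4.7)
The plan is to proceed by induction on $n$, leveraging Theorem~\ref{un medio} to reduce the analysis at each step to a $2\times 2$ situation to which Proposition~\ref{prop:2-equiv} and Theorem~\ref{thm:2-cara} apply. The base case $n=2$ is immediate: the hereditary hypothesis and the forward direction of Theorem~\ref{thm:2-cara} give that $F_{12}$ preserves injectives, so the hypotheses of Proposition~\ref{prop:2-equiv} are met; statement~(1) then becomes Proposition~\ref{prop:2-equiv}(\ref{estriny}), statement~(2) is vacuous (no $i<j<k$ exist), and statement~(3) for the sole $m=1$ is exactly Theorem~\ref{thm:2-cara}\eqref{F12split} once one observes that the equalizer defining $T^1$ in Theorem~\ref{thm:basico} collapses to $T^1=F_{12}$.

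For the inductive step with $n\geq 3$ and a fixed $m\in\{1,\ldots,n-1\}$, apply Theorem~\ref{un medio} to obtain the equivalence
\[
\cat{A}_F \;\cong\; \bigl(\cat{A}^{\leq m}_{F^{\leq m}}\times\cat{A}^{>m}_{F^{>m}}\bigr)_{G^m}, \qquad G^m=\upmatrix{1}{T^m}{1}.
\]
The functor $T^m$ is left exact by Remark~\ref{cat-T}, and the equivalent $2\times 2$ category is hereditary because $\cat{A}_F$ is. Invoking Theorem~\ref{thm:2-cara} at this $2\times 2$ level yields simultaneously that $T^m$ preserves injectives, that both $\cat{A}^{\leq m}_{F^{\leq m}}$ and $\cat{A}^{>m}_{F^{>m}}$ are hereditary, and that $T^m p$ is a split epimorphism for every epimorphism $p$ between injective $F^{\leq m}$-comodules: this last assertion is precisely statement~(3) for the chosen $m$. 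For statement~(1) I would specialize to $m=n-1$: Proposition~\ref{prop:2-equiv}(\ref{estriny}) represents every injective $F$-comodule as coming from $G^{n-1}(N,E_n)$ with $N\in\cat{A}^{\leq n-1}_{F^{\leq n-1}}$ and $E_n\in\cat{A}_n$ both injective, and the inductive hypothesis applied to the order-$(n-1)$ hereditary comonad $F^{\leq n-1}$ gives $N\cong F^{\leq n-1}(E_1,\ldots,E_{n-1})$. Unwinding the equivalence of Theorem~\ref{un medio} via the matrix composition rules of Section~\ref{comonadmatrix} identifies the resulting comodule with $F(E_1,\ldots,E_n)$; the splitness of each individual $d^{ij}_X$ is then read off from the $2\times 2$ case (for the coactions involving the top block) together with the inductive hypothesis (for those internal to the $(n-1)$-block).

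Statement~(2) admits a short direct proof that sidesteps the induction: given an injective $E_i\in\cat{A}_i$, the free comodule $F(\iota_i E_i)\in\cat{A}_F$ is injective, and a brief computation using the matrix form of $\delta$ from Section~\ref{comonadmatrix} identifies its $(j,k)$-coaction (for $i<j<k$) with the relevant component $\delta^{ijk}_{E_i}$ of the comonad structure. Statement~(1), applied to this particular injective comodule, then forces $\delta^{ijk}_{E_i}$ to be a split epimorphism.

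The main obstacle is keeping the inductive hypothesis strong enough that Theorem~\ref{thm:2-cara} and Proposition~\ref{prop:2-equiv} may legitimately be invoked at each stage. Both results presuppose that the base categories of the $2\times 2$ setup have enough injectives, which for $\cat{A}^{\leq m}_{F^{\leq m}}$ is not a hypothesis of the present proposition. The natural remedy is to enlarge the inductive statement to carry along the conclusion of Proposition~\ref{prop:2-equiv}(\ref{sufiny}) -- namely that a hereditary $2\times 2$ normal triangular comodule category has enough injectives, produced explicitly by~\eqref{XenFE}. This extra clause is supplied at each stage by the very Theorem~\ref{thm:2-cara} being applied, so the enlarged induction closes without requiring any further hypothesis beyond those of the proposition.
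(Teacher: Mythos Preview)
Your approach is correct and follows essentially the same line as the paper's: induction via Theorem~\ref{un medio} to reduce to the $2\times 2$ situation, then Proposition~\ref{prop:2-equiv} and Theorem~\ref{thm:2-cara}; part~(2) deduced from (1) applied to the free comodule $F(\iota_iE_i)$; part~(3) obtained from Theorem~\ref{thm:2-cara}\eqref{F12split} at level $m$.

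The one notable variation is that for~(1) the paper peels off the \emph{first} factor (takes $m=1$) rather than the last. This is marginally more economical: since $F^{\leq 1}$ is the identity comonad, the equalizer defining $T^1$ in Theorem~\ref{thm:basico} degenerates and one has $T^1=F^1$ exactly, so the identification of $G^1\bigl(E_1,F^{>1}(E_2,\ldots,E_n)\bigr)$ with $F(E_1,\ldots,E_n)$ under the equivalence is immediate. With your choice $m=n-1$ the functor $T^{n-1}$ is the genuine equalizer construction, and the corresponding identification needs the (true, but not entirely tautological) remark that the equivalence of Theorem~\ref{thm:basico} intertwines the free functors. Either way works.

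Your concern about ``enough injectives'' in $\cat{A}^{\leq m}_{F^{\leq m}}$ is legitimate and indeed not addressed explicitly in the paper. Your proposed fix via an enlarged induction is fine, but a direct argument is also available and avoids any strengthening: for any left exact comonad $F$ on an abelian category $\cat{A}$ with enough injectives, the coaction $\mathbf{d}_X:X\to F(X)$ is a comodule morphism (by coassociativity) and a monomorphism (split by $\varepsilon_X$ in $\cat{A}$), so composing with $F$ applied to an injective envelope of $X$ in $\cat{A}$ embeds $X$ into the injective free comodule $F(E)$.
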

\begin{proof} 
$(1)$. Since $F$ is left exact, then the forgetful functor $\cat{A}_F \to \cat{A}$ is exact. Therefore, the free functor $F : \cat{A} \to \cat{A}_F$ preserves injectives, with implies that $F(E_1, \dots, E_n)$ is an injective $F$--comodule for every  injective  $(E_1, \dots, E_n) \in \cat{A}_1 \times \cdots \times \cat{A}_n$.   Let us prove that every injective $F$--comodule is of this form by induction on $n$. For $n = 1$, there is nothing to prove, so let $n > 1$. By Theorem \ref{un medio},  we can identify
$\cat{A}_{F}$ with the category of comodules $ (\cat{A}_1 \times  \cat{A}^{> 1}_{F^{> 1}})_{G^1}$. 
By Theorem \ref{thm:2-cara}, we have that  $\cat{A}^{> 1}_{F^{> 1}}$ is hereditary and 
that  $T^{1}$ preserves injectives. In this case, $T^1 = F^1$, see equation \eqref{Eq:Fm}. 
Therefore, using Proposition \ref{prop:2-equiv}, we have that any injective object of $\cat{A}_{F}$ is of the form
$(X,{\bf d}_X)\,=\, (E_1, F^{1}(E_1)\oplus(V,{\bf d}_V))$ for some injective objects $E_1 \in \cat{A}_1$ and 
$(V,{\bf d}_V) \in \cat{A}^{> 1}_{F^{> 1}}$. By induction hypothesis we get that $(V,{\bf d}_V)\,=\, F^{> 1}(E_2,\dots, E_n)$
for some injectives $E_j$ in $\cat{A}_j$, $j=2,\cdots,n$. Hence $(X,{\bf d}_X)=F(E_1,E_2, \cdots ,E_n)$ for some injective objects 
$E_i$ in $\cat{A}_i$, $i=1,\cdots,n$.  By Theorem \ref{thm:2-cara},
\[
(d^{12}_X, \dots, d^{1n}_X) : F^1E_1 = (F_{12}E_1, \dots, F_{1n}E_1) \to (E_2, \dots, E_n)
\]
is a split epimorphism, which implies that $d^{1j}_X$ is a split epimorphism for $j=2, \dots, n$. 
 Since by induction we know that each of the $d_V^{ij}$, $2\leq i < j \leq n$ is a split epimorphism,
we deduce that every ${d}_X^{ij}$, for every $i<j$ in $\{1,\dots,n\}$, is a split epimorphism.

$(2)$. It follows from $(1)$, since any object of the form $F(0,\cdots,0,E_i,0\cdots,0)$, for $i=1,\cdots,n-2$
is injective in $\cat{A}_{F}$ (recall that the structure morphisms are exactly $$d_{F(0,\dots,0,E_i,0\dots,0)}^{jk}
\,=\, \delta^{ijk}_{E_i},  \,\, \; i<j<k).$$

$(3)$. By Theorem   \ref{un medio}, we know that $(\cat{A}^{\leq m}_{F^{\leq m}} \times \cat{A}^{> m}_{F^{>m }} )_{G^m}$ is hereditary, where $G^{m}$ is the triangular $(2\times 2)$-matrix comonad constructed by using the functor $T^m$ from \eqref{eq:Tm}. Now we  conclude by using Remark \ref{cat-T} and Theorem \ref{thm:2-cara}.
\end{proof}

\begin{proposition}\label{prop:k}
Let $n \geq 2$ be a positive integer. Assume that $\cat{A}_i$ is an abelian category with enough injectives for all $i \in \{1,\cdots,n\}$, and that
$F_{ij}: \cat{A}_i \to \cat{A}_j$, is left exact for every $1 \leq i < j \leq n$. If the category of comodules 
$\cat{A}_{F}$ is a hereditary category, then we following statements hold. 
\begin{enumerate}[(1)]
\item Each of the functors $F_{ij}$ preserves injectives.
\item $F_{ij}p$ is a split epimorphism, for any epimorphism $p: E_i \to E_i'$ between injective objects in  $\cat{A}_i$, for every $1\leq i<j \leq n$.
\end{enumerate}
\end{proposition}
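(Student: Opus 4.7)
My plan is to induct on $n$. The base case $n=2$ is precisely Theorem \ref{thm:2-cara}, so the argument really lives in the inductive step. Assuming the proposition for triangular matrix comonads of order less than $n$ (with $n\geq 3$), I would apply Theorem \ref{un medio} with cut $m=n-1$ to obtain the equivalence
\[
\cat{A}_F \;\cong\; (\cat{A}^{\leq n-1}_{F^{\leq n-1}} \times \cat{A}_n)_{G^{n-1}}, \qquad G^{n-1}=\upmatrix{1}{T^{n-1}}{1}.
\]
One first checks that $\cat{A}^{\leq n-1}_{F^{\leq n-1}}$ is abelian with enough injectives---the latter coming from the standard argument that any $F^{\leq n-1}$-comodule $(X,d_X)$ embeds, via $d_X$, into $L(X)$, and hence via $L$ of a monomorphism $X\hookrightarrow E$ into an injective $E$, into the injective comodule $L(E)$. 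Since $T^{n-1}$ is left exact (Remark \ref{cat-T}), Theorem \ref{thm:2-cara} applied to this $(2\times 2)$ comonad yields: $(a)$ $T^{n-1}$ preserves injectives; $(b)$ both $\cat{A}^{\leq n-1}_{F^{\leq n-1}}$ and $\cat{A}_n$ are hereditary; $(c)$ $T^{n-1}(q)$ is a split epimorphism for every epimorphism $q$ between injective $F^{\leq n-1}$-comodules.

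The heredity in $(b)$ lets me invoke the induction hypothesis on the comonad $F^{\leq n-1}$, settling both statements of the proposition for all index pairs with $1 \leq i < j \leq n-1$. For the remaining case $j=n$, the key device is the commuting triangle of Theorem \ref{thm:basico}, which reads $F^{n-1} = UT^{n-1}L$ where $L$ is the free $F^{\leq n-1}$-comodule functor and $U$ is the (trivial here, since $F^{> n-1}=1_{\cat{A}_n}$) forgetful functor. In particular, for an object or morphism $x$ placed in coordinate $i$ of $\cat{A}^{\leq n-1}$ with zeros elsewhere, $T^{n-1}L(0,\ldots,x,\ldots,0)$ is exactly $F_{in}(x)$.

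Conclusion $(1)$ for $j=n$ is now immediate: if $E_i\in\cat{A}_i$ is injective then $(0,\ldots,E_i,\ldots,0)$ is injective in $\cat{A}^{\leq n-1}$; $L$ preserves injectives (being right adjoint to an exact forgetful functor), so $L(0,\ldots,E_i,\ldots,0)$ is injective in $\cat{A}^{\leq n-1}_{F^{\leq n-1}}$; and by $(a)$, $F_{in}E_i=T^{n-1}L(0,\ldots,E_i,\ldots,0)$ is injective in $\cat{A}_n$. Conclusion $(2)$ for $j=n$ proceeds analogously from an epimorphism $p:E_i\to E_i'$ between injectives: I form $L(0,\ldots,p,\ldots,0)$ and apply $(c)$. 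The critical---and essentially the only genuinely non-trivial---step is checking that this map is an epimorphism between injectives. Injectivity of source and target is as before; being epi reduces, via exactness of the forgetful functor, to each component $F_{il}(p)$ with $i\leq l\leq n-1$ being epi, which is trivial for $l=i$ and, for $i<l\leq n-1$, is precisely supplied by the inductive hypothesis applied to statement $(2)$ of the proposition for the pair $(i,l)$. Thus the inductive threading---arranging that the lower-order statement $(2)$ delivers the input needed by $(c)$---is the one place where the argument actually uses its hypothesis; everything else is a straightforward unraveling of the reductions furnished by Theorems \ref{un medio} and \ref{thm:basico}.
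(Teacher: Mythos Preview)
Your argument is correct and takes a genuinely different route from the paper. The paper cuts at $m=1$, obtaining $\cat{A}_F \cong (\cat{A}_1 \times \cat{A}^{>1}_{F^{>1}})_{G^1}$; heredity of $\cat{A}^{>1}_{F^{>1}}$ then handles all pairs with $i\geq 2$ by induction, while for $i=1$ the paper invokes Proposition~\ref{prop:Injectivos-n}(1) (the structure of injectives) to conclude that $F^1E_1$ has injective components. You instead cut at $m=n-1$, so that induction handles all pairs with $j\leq n-1$, and for $j=n$ you exploit the factorization $F^{n-1}=T^{n-1}L$ from Theorem~\ref{thm:basico}. This cleanly sidesteps any appeal to Proposition~\ref{prop:Injectivos-n}, at the cost of the extra verification that $L(0,\dots,p,\dots,0)$ is an epimorphism between injectives---which, as you note, is exactly where the inductive hypothesis on statement~(2) is consumed. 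Both approaches are short; yours is arguably more self-contained, while the paper's makes visible the interplay with the description of injective comodules.
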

\begin{proof}
$(1)$ For $n = 2$, the claim follows from Theorem \ref{thm:2-cara}. Assume $n \geq 3$, we will proceed by induction on $n$. By Theorem \ref{un medio}, the hereditary category $\cat{A}_F$ is isomorphic $(\cat{A}_1 \times \cat{A}^{> 1}_{F^{>1}})_{F^1}$ (here, $T^1 = F^1$). From Theorem \ref{thm:2-cara} we get that $\cat{A}^{> 1}_{F^{>1}}$ is hereditary. By induction hypothesis, all the functors $F_{ij}$ with $2 \leq i < j$ preserve injectives. Given an injective object $E_1$ in $\cat{A}_1$, we know from the proof of Theorem \ref{un medio} that $F^{1}E_1=((F_{12}E_1, \dots,F_{1n}E_1), {\bf d}_{F^1E_1} ),$  is injective in $\cat{A}^{> 1}_{F^{>1}}$. On the other hand, by Proposition \ref{prop:Injectivos-n}.(1)  there exists an injective object $(E_2, \dots, E_n) \in \cat{A}_2 \times \cdots \times \cat{A}_n$ such that $F^{1}E_1 = F^{>1}(E_2, \dots, E_n)$. Therefore, since each $F_{2j}$ for $j =3, \dots n$ preserves injectives, we obtain that $F_{1k}(E_1)$ is injective for every $k = 2, \dots, n$. This completes the induction. \\
$(2)$ Use induction on $n$ and Theorems \ref{un medio}, \ref{thm:2-cara}. 
\end{proof}

The following is our main result in this section.

\begin{theorem}\label{main-result}
 Assume that  $\cat{A}_i$ is an abelian category with enough injectives for all $i \in \{1,\cdots,n\}$, and consider a normal triangular $n  \times n$-matrix comonad $F = (F_{ij}) : \cat{A} \to \cat{A}$ such that 
$F_{ij}: \cat{A}_i \to \cat{A}_j$, is left exact for every $1\leq i < j \leq n$. 
Then the category of comodules
$\cat{A}_F$  is hereditary if, and only if, the following conditions are fulfilled.
\begin{enumerate}[(a)]
\item $T^{n-1}$ and every $F_{ij}$ preserves injectives for  $1 \leq i <j \leq n$.
\item  For each $m \in \{1,\dots,n-1\}$, $T^mp$ is a split epimorphism, for every epimorphism $p: (E,{\bf d}_E) \to (E',{\bf d}_{E'})$ between injective $F^{\leq m}$-comodules.

\item For every injective object $E_i$ in $\cat{A}_i$, its image  $\delta^{ijk}_{E_i}$ for $i<j<k$  is a split epimorphism.

\item Each of the categories $\cat{A}_i$ is hereditary.
\end{enumerate}
\end{theorem}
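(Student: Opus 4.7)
The plan is induction on $n$, with base case $n=2$ being Theorem \ref{thm:2-cara}.

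For necessity, the work is essentially done in Propositions \ref{prop:Injectivos-n} and \ref{prop:k}: condition (a) combines Proposition \ref{prop:k}(1) (for the $F_{ij}$) with the necessity half of Theorem \ref{thm:2-cara} applied to the decomposition $\cat{A}_F \cong (\cat{A}^{\leq n-1}_{F^{\leq n-1}} \times \cat{A}_n)_{G^{n-1}}$ of Theorem \ref{un medio} (for $T^{n-1}$); condition (b) is Proposition \ref{prop:Injectivos-n}(3); condition (c) is Proposition \ref{prop:Injectivos-n}(2); and (d) follows by exhibiting each $\cat{A}_i$ as a full exact subcategory of $\cat{A}_F$, via the objects supported in the $i$-th coordinate, so that heredity descends.

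For sufficiency, the inductive step begins by applying Theorem \ref{un medio} with $m=1$, giving $\cat{A}_F \cong (\cat{A}_1 \times \cat{A}^{>1}_{F^{>1}})_{G^1}$ with $G^1 = \upmatrix{1}{F^1}{1}$: since $F^{\leq 1} = 1_{\cat{A}_1}$, the equalizer defining $T^1$ collapses and $T^1 = F^1$. I would then invoke Theorem \ref{thm:2-cara} on this $2 \times 2$ comonad. The split-epimorphism clause on $F^1 p$ is hypothesis (b) at $m=1$; the heredity of $\cat{A}_1$ is (d); and the heredity of $\cat{A}^{>1}_{F^{>1}}$ follows from the inductive hypothesis applied to the $(n-1)\times (n-1)$ matrix comonad $F^{>1}$, after observing that (a)--(d) for $F$ restrict to (a)--(d) for $F^{>1}$. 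The clauses (b), (c), (d) transfer verbatim by index restriction; clause (a) requires the additional verification described next.

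The main obstacle is thus twofold but of a single nature: showing that $F^1$ preserves injectives as a functor $\cat{A}_1 \to \cat{A}^{>1}_{F^{>1}}$, and showing that the top functor $T^{n-2}$ attached to $F^{>1}$ preserves injectives. For an injective $E_1 \in \cat{A}_1$, the components $F_{1j}E_1$ of $F^1 E_1$ are injective in $\cat{A}_j$ by (a), while the structure morphisms $\delta^{1jk}_{E_1}$ are split epimorphisms by (c). I would promote these pointwise data to the injectivity of $F^1 E_1$ in $\cat{A}^{>1}_{F^{>1}}$ by a secondary induction that iterates the $2\times 2$ decomposition of $\cat{A}^{>1}_{F^{>1}}$ afforded by Theorem \ref{un medio}, applying at each stage the characterisation of injectives in Proposition \ref{prop:2-equiv}(2) to exhibit the object as cofree on an injective pair; the auxiliary factor arises as the kernel of a split epimorphism and is therefore injective as a direct summand of an injective. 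The same scheme, applied to the equalizer defining $T^{n-2}$ of $F^{>1}$ on an arbitrary injective comodule, supplies the preservation statement needed for clause (a) of $F^{>1}$ and closes the induction.
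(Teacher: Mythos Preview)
Your necessity argument matches the paper for (a)--(c), but your route to (d) is not justified: ``heredity descends to a full exact subcategory'' is not a general principle, and the inclusion $\cat{A}_i \hookrightarrow \cat{A}_F$ via objects supported in slot $i$ need not preserve injectives (for $i=1$ it is a \emph{left} adjoint to $\pi_1$, not a right one). The paper instead derives (d) from the same induction machinery: Theorems~\ref{un medio} and~\ref{thm:2-cara} force both $\cat{A}^{\leq n-1}_{F^{\leq n-1}}$ and $\cat{A}_n$ to be hereditary, and one iterates.

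For sufficiency the paper decomposes at $m=n-1$, not $m=1$, and this is not cosmetic. With $m=n-1$ the off-diagonal functor of the resulting $2\times 2$ comonad is $T^{n-1}$, whose preservation of injectives is literally hypothesis~(a) and whose target $\cat{A}_n$ is a plain category; the split-epi clause is (b) at $m=n-1$, and the induction runs on $F^{\leq n-1}$, where (b)--(d) restrict directly. Your $m=1$ choice instead forces you to show that $F^1=T^1$ preserves injectives as a functor into the \emph{comodule category} $\cat{A}^{>1}_{F^{>1}}$, which is nowhere among (a)--(d). Your secondary induction has a gap: to invoke Proposition~\ref{prop:2-equiv}(2) you need the structure map to be a split epimorphism \emph{in} $\cat{A}^{>k}_{F^{>k}}$, but condition~(c) only supplies componentwise splittings in the individual $\cat{A}_j$, and these sections need not assemble into a comodule morphism. (This is repairable---prove by downward induction on $m$ that $(F_{ij}E_i)_{j>m}$ is injective in $\cat{A}^{>m}_{F^{>m}}$ for all $i\leq m$, so the target of each structure map is already injective and the epi splits automatically---but that is more than your sketch provides.) Likewise, (b) does not transfer to $F^{>1}$ ``verbatim by index restriction'': the functors $\hat{T}^m$ attached to $F^{>1}$ have different domains from the $T^m$ of $F$, and the transfer needs the identification $\hat{T}^m(E_2,\dots) = T^{m+1}(0,E_2,\dots)$ together with the fact that the zero-in-first-slot inclusion preserves injectives. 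The paper's $m=n-1$ decomposition sidesteps all of this.
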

\begin{proof}
$\Leftarrow )$. We use induction on $n$. 
For $n=2$ this implication is given by Theorem \ref{thm:2-cara}, since in this case we have $T^1 =F^1=F_{12}$. 

Let $n \geq 3$, and suppose the implication is true for any category 
of comodules over a normal triangular matrix  comonad constructed by using a linearly ordered  set of length $n-1$.  Without loss of generality we can suppose by Theorem \ref{un medio} that 
$\cat{A}_{F} \,=\, (\cat{A}^{\leq n-1}_{F^{\leq n-1}} \times \cat{A}_n)_{G^{n-1}}$, where as before $G^{n-1}$ is the triangular $2\times 2$-matrix comonad associated to the functor $T^{n-1} : \cat{A}^{\leq n-1}_{F^{\leq n-1}} \to \cat{A}_n$ given in \eqref{eq:Tm}. Since the axioms $(a)$-$(d)$ are satisfied for the triangular $(n-1) \times (n-1)$-matrix comonad $F^{\leq n-1}$, by induction hypothesis we know that its category of comodules $\cat{A}^{\leq n-1}_{F^{\leq n-1}}$ is hereditary. 
Henceforth, by Remark \ref{cat-T}, Theorem \ref{thm:2-cara} can be applied as $\cat{A}_n$ is already assumed to be hereditary. Therefore, 
$\cat{A}_F$ is hereditary since $T^{n-1}$ preserves injectives.

$\Rightarrow)$. Conditions $(b)$ and $(c)$ follow from Proposition \ref{prop:Injectivos-n}. By Proposition \ref{prop:k}.(1), we know that $F_{ij}$ preserves injectives for every $1 \leq i < j \leq n$. Since, by Theorems \ref{un medio} and \ref{thm:2-cara}, we get that $T^{n-1}$ preserves injectives, we conclude $(a)$.
We use induction to prove $(d)$. For $n=2$, this implication is clear from Theorem \ref{thm:2-cara}. Suppose that $(d)$ holds for any hereditary category of comodules over a triangular matrix comonad which was constructed by using a 
linearly ordered set of length $n-1$. By Remark \ref{cat-T}, we know that $T^{m}$ is a left exact functor 
for any $m \in \{1, \cdots,n-1\}$. Thus by Theorems \ref{thm:2-cara} and \ref{un medio}, we know that 
$\cat{A}^{\leq n-1}_{F^{\leq n-1}}$ and $\cat{A}_n$ are hereditary. Hence $\cat{A}_i$, $i=1,\cdots,n$ are hereditary. 
This gives to us condition $(d)$.  
\end{proof}

\section{Hereditary triangular matrix coalgebras.}\label{sec:coalgebras}

We illustrate our results by applying some of them to categories of comodules over coalgebras. We refer to \cite{Brzezinski/Wisbauer:2003} for basic information on coalgebras over commutative rings and their categories of comodules. 

\begin{punto}\textbf{Triangular matrix coalgebras.}
Let $C$ and $D$  be coalgebras over a commutative ring $K$, and $M$ be a $C-D$--bicomodule. Then the functors $- \tensor{K} C, - \tensor{K} D$ give comonads over the category $\rmod{K}$ of $K$--modules, and $- \tensor{K} M : \rmod{K} \to \rmod{K}$ becomes a $(- \tensor{K} D)-( - \tensor{K}C)$--bicomodule functor (see Subsection \ref{3comonad}). These functors define a triangular matrix comonad on $\rmod{K} \times \rmod{K} = \rmod{K \times K}$ represented by the $K \times K$--coalgebra  
$$ E = \scriptscriptstyle{ \upmatrix{C}{M}{D}}.$$
The comultiplication of this coalgebra over $R = K \times K$ is given by 
\begin{footnotesize}
\begin{multline}\label{Delta}
\Delta  \begin{pmatrix}
  c & 0 \\
  m & d
\end{pmatrix} = \underset{(c)}\sum  \begin{pmatrix}
  c_{(1)} & 0 \\
  0 & 0
\end{pmatrix} \tensor{R}  \begin{pmatrix}
  c_{(2)} & 0 \\
  0 & 0
\end{pmatrix} + \underset{(d)}\sum  \begin{pmatrix}
  0 & 0 \\
  0 & d_{(1)}
\end{pmatrix} \tensor{R}  \begin{pmatrix}
  0 & 0 \\
  0 & d_{(2)}
\end{pmatrix}  \\ + \underset{(m)}\sum\begin{pmatrix}
  m_{(-1)} & { 0} \\
  0 & 0
\end{pmatrix} \tensor{R}  \begin{pmatrix}
  0 &  0 \\ 
  m_{(0)} & 0
\end{pmatrix} +  \underset{(m)}\sum\begin{pmatrix}
  0 & 0 \\
 m_{(0)}   & 0
\end{pmatrix} \tensor{R}  \begin{pmatrix}
  0 & 0 \\
  0 & m_{(1)}
\end{pmatrix},
\end{multline}
\end{footnotesize}
and  its counity is defined by 
\[
\varepsilon \begin{pmatrix}
  c & 0 \\
  m & d
\end{pmatrix} = \begin{pmatrix}
  \varepsilon_{C}(c) & 0 \\
  0 & \varepsilon_{D}(d)
\end{pmatrix},
\]
where we are using Heyneman-Sweedler's notation. 

Assume that $C$ and $M$ are flat as $K$--modules. The category of right $C$--comodules is denoted by $\rcomod{C}$, and similarly for any other coalgebra over a commutative ring. By Theorem \ref{thm:basico},  we have the normal triangular matrix comonad $ G  =\scriptscriptstyle{ \upmatrix{1}{- \cotensor{C} M}{1}}$  on  $\rcomod{C} \times \rcomod{D}$, and the equivalence of categories
\begin{equation}\label{eq1}
(\rcomod{C} \times \rcomod{D})_{ G } \cong  (\rmod{K \times K})_{- \tensor{R} E} = \rcomod{E}.
\end{equation}

\end{punto}

\begin{punto}\textbf{Base change ring by a Frobenius algebra.}\label{cambiodebase}
 Let $R$ be  a commutative Frobenius algebra over a commutative ring $K$ (see \cite{Kasch:1960}), that is, the functor $- \otimes_K R : \rmod{K} \to \rmod{R}$ is right adjoint to the forgetful functor from $\rmod{R}$ to $\rmod{K}$ (see \cite{Morita:1965} and \cite[Remark 2.3.1]{Castano/alt:1999}). The counit  of this adjunction evaluated at $K$ gives the  Frobenius functional $\psi : R \to K$. If $\eta$ denotes the unit, then $\eta_R(1) = \sum_i e_i \otimes f_i \in R \tensor{K} R$ is the Casimir element.  Given any $R$--coalgebra $E$,  we get the adjoint pairs of functors 
\[
\xymatrix@C=40pt{\rcomod{E} \ar@<0.5ex>[r] & \ar@<0.5ex>^-{- \tensor{R} E }[l]\rmod{R} \ar@<0.5ex>[r] & \ar@<0.5ex>^-{- \otimes_K R} [l] \rmod{K},   }
\]
where the unlabelled arrows denote the forgetful functors that are left adjoints to $- \tensor{R} E$ and $- \otimes_K R$. 
 By composing these adjoint pairs we obtain the adjoint pair
\begin{equation}\label{adjcompuesta}
\xymatrix@C=40pt{\rcomod{E} \ar@<0.5ex>[r] &  \ar@<0.5ex>^-{- \otimes_K E }[l] \rmod{K}}, 
\end{equation}
where we are using the isomorphism $E \cong R \tensor{R} E$. In this way, we obtain a comonad $- \tensor{K} E : \rmod{K} \to \rmod{K}$, which is determined by the structure of $K$--coalgebra on $E$ with comultiplication 
\[
\xymatrix{E \ar^-{\tilde{\Delta}}[r] & E \otimes_K E, &  x \ar@{|->}[r] & \sum_{(x),i}x_{(1)}e_i \otimes f_i x_{(2)}  }
\]
and counity $\tilde{\varepsilon} = \psi \circ \varepsilon$, where $\varepsilon : E \to R$ the counity of the $R$--coalgebra $E$. 

The adjoint pair \eqref{adjcompuesta} gives rise to the comparison functor $V : \rcomod{E} \to (\rmod{K})_{- \otimes E}$ (see \cite[Section 3.2]{Barr/Wells:1985}). If $E$ is flat as an $R$--module, then $\rcomod{E}$ is an abelian category and the faithful forgetful functor $\rcomod{E} \to \rmod{R}$ is exact \cite{Eilenberg/Moore:1965}. This easily implies that the forgetful functor $\rcomod{E} \to \rmod{K}$ satisfies the hypotheses of Beck's theorem (precisely, the dual of \cite[Theorem 3.3.10]{Barr/Wells:1985}) and, hence, $V$ is an equivalence of categories. We get thus that the categories of right comodules over the $R$--coalgebra $(E,\Delta,\varepsilon)$ and of right comodules over the $K$--coalgebra $(E,\tilde{\Delta},\tilde{\varepsilon})$ are equivalent. \end{punto}

\begin{punto}\textbf{Bipartite coalgebras.}
Let $C, D$ be $K$--coalgebras, and $M$ be a $C-D$--bicomodule. Assume that $C, D, M$ are flat $K$--modules. 
In the case of the coalgebra $E = \upmatrix{C}{M}{D}$, the base ring $R = K \times K$ is a Frobenius $K$--algebra with Frobenius functional $\psi : R \to K$ given by $\psi (a,b) = a + b$ for all $(a,b) \in K \times K$, and Casimir element $e = u \otimes u + v \otimes v \in R \otimes R$, where $u = (1, 0), v  = (0,1)$. By  \ref{cambiodebase} ,  $E$ is a  $K$--coalgebra.  Explicitly, the comultiplication and counity are

\begin{footnotesize}
\begin{multline}\label{coalDelta}
\tilde{\Delta}  \begin{pmatrix}
  c & 0 \\
  m & d
\end{pmatrix} = \underset{(c)}\sum  \begin{pmatrix}
  c_{(1)} & 0 \\
  0 & 0
\end{pmatrix} \tensor{}  \begin{pmatrix}
  c_{(2)} & 0 \\
  0 & 0
\end{pmatrix} + \underset{(d)}\sum  \begin{pmatrix}
  0 & 0 \\
  0 & d_{(1)}
\end{pmatrix} \tensor{}  \begin{pmatrix}
  0 & 0 \\
  0 & d_{(2)}
\end{pmatrix}  \\ + \underset{(m)}\sum\begin{pmatrix}
  m_{(-1)} & { 0} \\
  0 & 0
\end{pmatrix} \tensor{}  \begin{pmatrix}
  0 &  0 \\ 
  m_{(0)} & 0
\end{pmatrix} +  \underset{(m)}\sum\begin{pmatrix}
  0 & 0 \\
 m_{(0)}   & 0
\end{pmatrix} \tensor{}  \begin{pmatrix}
  0 & 0 \\
  0 & m_{(1)}
\end{pmatrix},
\end{multline}
\end{footnotesize}
and
\[
\tilde{\varepsilon} \begin{pmatrix}
  c & m \\
  0 & d
\end{pmatrix} = \varepsilon_C(c)  + \varepsilon_D (d)
\] 
We recover thus the construction of a bipartite $K$--coalgebra from \cite[p. 91]{Kosakowska/Simson:2008} (called triangular matrix coalgebra in  \cite{Iovanov:2015}).
\end{punto}

In the following theorem we say that a $K$--coalgebra is \emph{right hereditary} if  the category $\rcomod{C}$ is hereditary. 

\begin{theorem}\label{3hered}
Let $C$, $D$ be $K$--coalgebras, and $M$ be a $C$--$D$--bicomodule. Assume that $C, D$ and $M$ are flat as $K$--modules. The bipartite $K$--coalgebra $\upmatrix{C}{M}{D}$ is right hereditary if and only if the following conditions hold.
\begin{enumerate}
\item $ U \cotensor{C} M $ is an injective right $D$--comodule for every injective right $C$--comodule $U$;
\item $C$ and $D$ are right hereditary;
\item $p \cotensor{C} M$ is a split epimorphism for each epimorphism $p : E_1 \to E_1'$ of injective right $C$--comodules $E_1, E_1'$. 
\end{enumerate}
\end{theorem}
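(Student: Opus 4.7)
The plan is to reduce Theorem \ref{3hered} to a direct application of Theorem \ref{thm:2-cara}, using the bridge between the bipartite $K$--coalgebra $E = \upmatrix{C}{M}{D}$ and the comodule category of a normal triangular $2\times 2$--matrix comonad on $\rcomod{C} \times \rcomod{D}$. More precisely, the identifications already established in this section give the chain of equivalences
\[
\rcomod{E} \;\cong\; (\rmod{R})_{-\tensor{R} E} \;\cong\; (\rcomod{C} \times \rcomod{D})_{G},
\]
where $R = K\times K$ and $G = \upmatrix{1}{-\cotensor{C} M}{1}$. The first equivalence follows from the base change of \ref{cambiodebase} (justified here since $R$ is a Frobenius $K$--algebra and $E$ is flat as an $R$--module, because $C,D,M$ are flat over $K$), while the second is equation \eqref{eq1} coming from Theorem \ref{thm:basico}.

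Once this identification is in place, the task reduces to verifying that Theorem \ref{thm:2-cara} applies to the comonad $G$ on $\rcomod{C}\times \rcomod{D}$, and that conditions (a), (b), (c) of that theorem translate respectively to (1), (2), (3) of our statement. I would first check that the hypotheses of Theorem \ref{thm:2-cara} are met: since $C$ and $D$ are $K$--flat, both $\rcomod{C}$ and $\rcomod{D}$ are Grothendieck categories with enough injectives; and since $M$ is $K$--flat, the functor $-\cotensor{C} M : \rcomod{C} \to \rcomod{D}$ is left exact (apply the Snake Lemma to the equalizer
\[
0 \longrightarrow N \cotensor{C} M \longrightarrow N \otimes_K M \longrightarrow N \otimes_K C \otimes_K M
\]
defining the cotensor product, using that $-\otimes_K M$ and $-\otimes_K C \otimes_K M$ are exact).

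With the hypotheses secured, the translation of the three conditions is immediate: $F_{12}$ in Theorem \ref{thm:2-cara} is precisely $-\cotensor{C} M$, so (a)$\Leftrightarrow$(1); the base categories $\cat{A}_1,\cat{A}_2$ are $\rcomod{C},\rcomod{D}$, so (b)$\Leftrightarrow$(2); and the splitting condition on $F_{12}p$ in (c) is literally the content of (3). There is no real obstacle: the conceptual work has been done in the general Theorem \ref{thm:2-cara}, and the only piece of actual verification particular to the coalgebra setting is the left exactness of $-\cotensor{C} M$ under $K$--flatness hypotheses. The remainder of the proof is merely bookkeeping along the equivalence $\rcomod{E} \cong (\rcomod{C}\times \rcomod{D})_G$, which is functorial and respects (exact) epimorphisms and injectives.
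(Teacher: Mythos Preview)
Your proposal is correct and follows essentially the same route as the paper's proof: reduce to Theorem \ref{thm:2-cara} via the equivalence \eqref{eq1} together with the base-change equivalence of paragraph \ref{cambiodebase}, after checking that $-\cotensor{C} M$ is left exact under the flatness hypotheses. You have simply unpacked in more detail what the paper compresses into a single sentence.
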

\begin{proof}
Since we assume $M$, $C$ and $D$ to be flat over $K$, it follows that the functor $- \cotensor{C} M : \rcomod{C} \to \rcomod{D}$ is left exact. Now, the theorem follows from Theorem \ref{thm:2-cara}, the equivalence of categories \eqref{eq1}, and the equivalence of categories given at the end of paragraph \ref{cambiodebase}. 
\end{proof}

\begin{punto}\textbf{Generalized matrix coalgebras.}
Let $M_{ij}$,  $i,j = 1, \dots, n$,  be a set of modules over a commutative ring $K$, and consider the endofunctors $F_{ij} =  M_{ij} \otimes_{K} - : \rmod{K} \to \rmod{K}$. Then the matrix  functor $F : \rmod{K}^n \to \rmod{K}^n$ has the structure of a comonad if and only if there exists a set of natural transformations $\delta^{ikj}$ and $\varepsilon^i$ as in Proposition \ref{comonadasmatrices}. These natural transformations are determined by linear maps $\phi_{ikj} = \delta^{ikj}_K : M_{ij} \to M_{kj} \otimes M_{ik}$ and $\epsilon_i = \varepsilon^i_K : M_{ii} \to K$.  We thus obtain a $K^n$-coalgebra 
\[
F(K) = \left( \begin{array}{cccc}
M_{11} & M_{21} & \cdots & M_{n1} \\
M_{12} & M_{22} & \cdots & M_{n2} \\
\vdots & \vdots & & \vdots \\
M_{1n} & M_{2n} & \cdots & M_{nn} 
\end{array}\right).
\]
By Remark \ref{remark:CT}, each of the entries in the diagonal of this matrix is a $K$--coalgebra, and $M_{ij}$ is an $M_{jj}-M_{ii}$--bicomodule. Also, every $\phi_{ikj}$  factors trough the cotensor product $M_{kj} \square_{M_{kk}} M_{ik}$. Using the base change of ring from \ref{cambiodebase} with the Frobenius $K$--algebra $R = K^n$ we get the `comatrix' $K$-coalgebra of \cite[Section 2]{Iovanov:2015}. We prefer the name matrix coalgebras to avoid confusion with the notion of a comatrix coring (and, in particular, comatrix coalgebra) from \cite{Kaoutit/Gomez:2003}, which is a different construction. As in \ref{cambiodebase}, the comultiplication and counit of this matrix $K$--coalgebra can be computed explicitly. 
\end{punto}

\begin{punto}\textbf{Triangular matrix corings.}
More generally, we may consider corings $\coring{C}$ and $\coring{D}$ over different base rings $A$ and $B$, respectively (see \cite{Brzezinski/Wisbauer:2003}). Given a $\coring{C}-\coring{D}$--bicomodule $M$, we get the triangular matrix $A \times B$-coring $\upmatrix{\coring{C}}{M}{\coring{D}}$. Under suitable flatness conditions, a  result  similar to Theorem \ref{3hered} may be formulated for corings.

\begin{remark}\label{remark:coanillo}
Let $\coring{C}$ be an $R$-coring which is flat as left $R$-module.  Assume that $R$ is a (possibly non commutative) Frobenius algebra over a commutative ring $K$.  Since the functor $R\tensor{K}-: \rmod{K} \to \rmod{R}$ is then a right adjoint to the forgetful functor from the category of right $R$--modules $\rmod{R}$ to $\rmod{K}$, the arguments from \ref{cambiodebase} run here to prove that the category of right  $\coring{C}$--comodules is equivalent to the category of right comodules over a certain $K$-coalgebra built from $R$ and $\coring{C}$. In particular, when $\coring{C}=R$ endowed with the trivial coring structure, we get the main result from \cite{Abrams:1999}, namely, that the category of right $R$-modules is equivalent to the category of right $R$-comodules.
\end{remark}
\end{punto}

\providecommand{\bysame}{\leavevmode\hbox to3em{\hrulefill}\thinspace}
\providecommand{\MR}{\relax\ifhmode\unskip\space\fi MR }
\providecommand{\MRhref}[2]{%
  \href{http://www.ams.org/mathscinet-getitem?mr=#1}{#2}
}
\providecommand{\href}[2]{#2}

\end{document}